\documentclass[11pt]{amsart}
\usepackage{latexsym,amsmath, amscd, amssymb, amsthm, euscript, mathrsfs, stmaryrd}
\usepackage[all]{xy}
\usepackage{hyperref}
\usepackage[colorinlistoftodos]{todonotes}
\usepackage[cal=boondoxo, scr=boondoxo]{mathalfa}
\usepackage{graphicx}
\graphicspath{ {./images/} }

\newtheorem{thm}[subsection]{Theorem}

\newtheorem{thm/def}[subsection]{Theorem/Definition}

\newtheorem{cor}[subsection]{Corollary}

\newtheorem{lem}[subsection]{Lemma}

\newtheorem{prop}[subsection]{Proposition}

\theoremstyle{definition}

\theoremstyle{definition}

\theoremstyle{definition}

\newtheorem{rem}[subsection]{Remark}

\numberwithin{equation}{subsection}

\newtheorem{pg}[subsection]{}

\newcommand{\R}{\mathrm{R}}

\newcommand{\mc}{\mathcal }

\newcommand{\Sp}{\text{\rm Spec}}

\newcommand{\dR}{\text{dR}}

\newcommand{\Fil}{\text{\rm Fil}}

\newcommand{\mls}{\mathscr}

\newcommand{\et}{\text{\rm \'et}}

\newcommand{\qcoh}{\mathrm{qcoh}}

\newcommand{\F}{F}

\usepackage{tikz-cd}

\newcommand{\syn}{\mathrm{syn}}
\newcommand{\Spf}{\mathrm{Spf}}

\usepackage{rotating}

\AtBeginDocument{%
   \def\MR#1{}
}

\usepackage{relsize}
\usepackage[bbgreekl]{mathbbol}
\usepackage{amsfonts}
\DeclareSymbolFontAlphabet{\mathbb}{AMSb} 
\DeclareSymbolFontAlphabet{\mathbbl}{bbold}
\newcommand{\Prism}{{\mathlarger{\mathbbl{\Delta}}}}

\newcommand{\crys}{\text{\rm crys}}
\newcommand{\PG}{\mathbf{Vect}^{\mathrm{iso}}_{\{0,1\}}}
\newcommand{\ET}{\text{\rm \'ET}}
\newcommand{\FFG}{\text{\rm FFG}}

\begin{document}

\title{Height $1$ group schemes and prismatic $F$-gauges}

\author{Shubhodip Mondal and Martin Olsson}

\begin{abstract}
    We describe the prismatic $F$-gauge associated to a finite flat height one group scheme over a smooth variety of positive characteristic.  As applications, we derive the description of the crystalline Dieudonn\'e module of Berthelot-Breen-Messing in this case and recover results of Bragg-Olsson describing flat cohomology using a Hoobler-type sequence.
\end{abstract}

\maketitle

\section{Introduction}

\subsection*{Motivations}
Let $X$ be a smooth scheme over a perfect field $k$ of characteristic $p>0$, and let $G/X$ be a finite flat abelian group scheme over $X$.  The general theory of Ansch\"utz-Le Bras \cite{MR4530092} and Mondal \cite{MR4354128, mondal2025dieudonnetheoryclassifyingstacks} associates to $G$ a prismatic $F$-gauge $\mc M(G)\in \mls D_{\qcoh}(X^\syn )$; that is, a quasi-coherent complex on the syntomification of $X$ as defined in \cite{ Bhattnotes, BL2}.  The purpose of this article is to analyze the structure of $\mc M(G)$ in the case when $G$ has coheight $1$ (meaning the Cartier dual $G^*$ of $G$ has height $1$).  In addition to describing $\mc M(G)$ in simpler terms in this case, we explain how the structure of  $\mc M(G)$  gives rise to several previously obtained results.
\begin{enumerate}
    \item The crystalline realization of $\mc M(G)$ is the crystalline Dieudonn\'e module $M_\crys (G)$ of Berthelot-Breen-Messing \cite{MR667344}, which in the case of a coheight $1$ group scheme has a particularly simple structure \cite[4.3.6]{MR667344}.  Namely, it is the pushforward along the canonical morphism of topoi $(X/k)_{\crys }\rightarrow (X/W(k))_\crys $ of the crystal on $X/k$ defined by the vector bundle $\mls Lie _{G^*}$ on $X$.  More precisely, if $U\hookrightarrow T$ is a PD-thickening over $k$ of an \'etale $X$-scheme $U$ then because of the divided powers on the ideal of $U$ in $T$ the Frobenius morphism $\F _T:T\rightarrow T$ factors through a map $\Phi _T:T\rightarrow U$ and $M_\crys (G)_T = \Phi _T^*\mls Lie _{G^*}|_U$.
    \item By \cite[Theorem B]{madapusi2025perfectfgaugesfiniteflat} the flat cohomology $R\Gamma (X, G)$ is isomorphic to $R\Gamma (X^\syn, \mc M(G^*)\{1\})$, where $(-)\{1\}$ denotes a Breuil-Kisin twist.  In the height $1$ case this flat cohomology can also be computed using a Hoobler-type sequence \cite[1.19]{BraggOlssonI}
    $$
    R\Gamma (X, G)[1]\simeq R\Gamma (X, \xymatrix{\mls E\otimes Z\Omega ^1_X\ar[r]^-{\rho -C}& \mls E\otimes \Omega ^1_X}),
    $$
    where $(\mls E, \rho )$ is the vector bundle with semilinear map $\rho :F^*\mls E\rightarrow \mls E$ corresponding to $G$ by \cite[Expos\'e VIIA, Remarque 7.5]{SGA31}. An explanation for this comparison in the case of $G = \mu _p$ was given in \cite[4.4.7]{Bhattnotes}.
\end{enumerate}

We will derive both (1) and (2) directly from the structure of the $\F$-gauge $\mc M(G^*)$.

\subsection*{Statement of the main result}
\begin{pg}
The Nygaard-filtered prismatization $k^{\mc N}$ of $k$ can be described as the stack quotient
$$
k^{\mc N} = [\Spf (W(k)[u, t]/(ut-p))/\mathbf{G}_m],
$$
where $u$ (resp. $t$) has weight $1$ (resp. $-1$).  Let $j_t:X_t^{\mc N}\rightarrow X^{\mc N}$ (resp. $j_u:X_u^{\mc N}\rightarrow X^{\mc N}$) be the pullback along $X^{\mc N}\rightarrow k^{\mc N}$ of the closed substack $[\Sp (k[t])/\mathbf{G}_m]\subset k^{\mc N}$ (resp. $[\Sp (k[u])/\mathbf{G}_m]$).  We also consider the stack $j_0:X_0^{\mc N}\rightarrow X^{\mc N}$ defined to be the base change of $B\mathbf{G}_{m, k}\subset k^{\mc N}$, so we have morphisms $i_t:X_0^{\mc N}\rightarrow X_t^{\mc N}$ and $i_u:X_0^{\mc N}\rightarrow X_u^{\mc N}$ for which $j_t\circ i_t = j_u\circ i_u$. As we discuss in section \ref{S:section2}, there are natural maps of stacks
$$
\pi _t:X_t^{\mc N}\rightarrow X, \ \ \pi _u:X_u^{\mc N}\rightarrow X
$$
which have the property that 
\begin{equation}\label{E:Ftwist}
\pi _t\circ i_t = F_X\circ \pi _i\circ i_u.
\end{equation}
\end{pg}
\begin{pg}\label{P:1.2}
Consider a triple $(\mc E, \mc E', \rho )$, consisting of two vector bundles $\mc E$ and $\mc E'$ on $X$, and a map $\rho :\F _X^*\mc E\rightarrow \mc E'$.  Note that then we have a map
$$
\xymatrix{
i_t^*\pi _t^*\mc E\ar[r]^-{\simeq }& i_u^*\pi _u^*\F _X^*\mc E\ar[r]^-{i_u^*\pi _u^*\rho }& i_u^*\pi _u^*\mc E',}
$$
which we denote simply by $\rho |_{{X_0}^{\mc N}}$.  Let $\mc M^{\mc N}_{(\mc E, \mc E', \rho )}$ be the complex on $X^{\mc N}$ given by
\begin{equation}\label{E:1.2.1}
\text{cocone}\left({\xymatrix{
j_{t*}\pi _t^*\mc E\oplus j_{u*}\pi _u^*\mc E'\ar[r]^-{\rho |_{X_0^{\mc N}}-i_u^*}& j_{0*}i_u^*\pi _u^*\mc E'}}\right), 
\end{equation}
where the first term is placed in degree $0$.
\end{pg}
\begin{pg}\label{P:1.3}
Consider the standard inclusions $j_{0HT}:X_{p=0}^\Prism \hookrightarrow X_{p=0}^{\mc N}$ and $j_{0\dR }:X_{p=0}^\Prism \hookrightarrow X_{p=0}^{\mc N}$.  Then $j_{0HT}$ (resp. $j_{0\dR }$) factors through a map $j_{HT}':X_{p=0}^\Prism \hookrightarrow X_u^{\mc N}$ (resp. $j_{dR}':X_{p=0}^\Prism \hookrightarrow X_t^{\mc N}$) and we have
$\pi _u\circ j_{0HT} = \pi _t\circ j_{0\dR}$, and both maps are the standard morphism $\pi :X_{p=0}^\Prism \rightarrow X$ (induced by transmutation from the map of ring stacks $R:W/p\rightarrow \mathbf{G}_a$ defined in characteristic $p$).  It follows that if $j^\Prism :X_{p=0}^\Prism \hookrightarrow X^\Prism $ is the inclusion then 
$$
j_{HT}^*\mc M_{(\mc E, \mc E', \rho )}^{\mc N}\simeq j_*^\Prism \pi ^*\mc E', \ \ j_{\dR }^*\mc M_{(\mc E, \mc E', \rho )}^{\mc N}\simeq j_*^\Prism \pi ^*\mc E.
$$
It follows that in order to descend $\mc M_{(\mc E, \mc E', \rho )}^{\mc N}$ to a prismatic $\F $-gauge one has to specify an isomorphism $\pi ^*\mc E'\simeq \pi ^*\mc E$.  Therefore starting with a pair $(\mc E, \rho )$ consisting of a vector bundle $\mc E$ on $X$ and a  map $\rho :\F _X^*\mc E\rightarrow \mc E$ we get a prismatic $\F$-gauge $\mc M^\syn _{(\mc E, \rho )}\in \mc D(X^\syn )$ whose pullback to $X^{\mc N}$ is $\mc M^{\mc N}_{(\mc E, \mc E, \rho )}$.
\end{pg}
The main result of this article is the following.

\begin{thm}\label{T:theorem1} Let $G/X$ be a finite flat abelian group scheme of height $\leq 1$, let $G^*$ denote the Cartier dual of $G$, and let $(\mc E, \rho )$ be the vector bundle on $X$ with map $\rho :\F _X^*\mc E\rightarrow \mc E$ associated to $G$ by \cite[Expos\'e VIIA, Remarque 7.5]{SGA31}.  Then
$$
\mc M(G^*)\simeq \mc M^\syn _{(\mc E, \rho )}.
$$
\end{thm}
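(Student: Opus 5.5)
Here is how I would proceed. Write $\mc M(G^*)$ for the $\F$-gauge attached to $G^*$ by Anschütz–Le Bras/Mondal. The approach is to compute it through the \emph{classifying stack} description. Mondal's theory gives $\mc M(G^*)$ (up to the normalization used in the paper) as $R\mathcal{H}om(\mc O\{?\}, \ldots)$, or more concretely as a shift of the relative syntomic cohomology of $BG^*$ over $X$. Equivalently, $\mc M(G^*)$ represents the functor $T\mapsto \mathrm{Hom}(G^*_T, \mathbf{G}_a^\sharp\text{-type ring stack})$ obtained by transmutation.

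The first step is to reduce to computing the pullback to $X^{\mc N}$ together with its descent datum. Since $X^\syn$ is a coequalizer of $j_{HT},j_{\dR}:X^\Prism\rightrightarrows X^{\mc N}$, it suffices to do three things:
\begin{enumerate}
\item identify $\mc M(G^*)|_{X^{\mc N}}$ with $\mc M^{\mc N}_{(\mc E,\mc E,\rho)}$;
\item identify the gluing isomorphism with the identity $\pi^*\mc E\simeq\pi^*\mc E$ of \ref{P:1.3};
\item check compatibility with the cocycle, which is automatic because everything lives in the heart of a t-structure.
\end{enumerate}
The height $1$ hypothesis means $G=\ker(F:\mathbf V(\mc E^\vee)\to \mathbf V(\mc E^\vee)^{(p)})$ twisted by $\rho$ (the SGA3 $p$-Lie algebra description). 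So $G^*$ is killed by $p$, and $\mc M(G^*)$ is a $p$-torsion object, i.e.\ pushed forward from $X^{\mc N}_{p=0}$. The fibre square $X^{\mc N}_{p=0}=X_t^{\mc N}\cup_{X_0^{\mc N}}X_u^{\mc N}$ (from $ut=0$ in $k[u,t]/(ut)$) gives, for any $p$-torsion-free-on-the-reduced-locus object $\mc F$, a Milnor-type fibre sequence
$$\mc F\to j_{t*}j_t^*\mc F\oplus j_{u*}j_u^*\mc F\to j_{0*}j_0^*\mc F.$$
The plan is to apply this to $\mc F=\mc M(G^*)$ and compute the three restrictions.

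Next I compute the restrictions. On $X_u^{\mc N}$ (the Hodge–Tate side, $t$ inverted after restricting) I expect $j_u^*\mc M(G^*)\simeq\pi_u^*\mathscr Lie_{G}{}^\vee\cong\pi_u^*\mc E$. This follows from the Hodge–Tate comparison for Dieudonné modules, since the Hodge–Tate realization of $\mc M(H)$ recovers $\omega$/Lie data of $H$, here of $G^*$ dual to $\mathscr Lie_G=\mc E$. On $X_t^{\mc N}$ (the de Rham side) I expect $j_t^*\mc M(G^*)\simeq\pi_t^*\mc E$. The reason is that the de Rham realization of $\mc M(G^*)$ is the BBM crystal, which for $p$-torsion height-one objects is Frobenius-pulled-back. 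The restriction maps to $X_0^{\mc N}$ are then the identity on the $u$-side and, on the $t$-side, the map induced via \eqref{E:Ftwist}, which must be $\rho$. To get this I would compute functorially in the universal case: both sides are functors of $(\mc E,\rho)$ compatible with base change and exact. By fpqc/étale locality and the fact that the SGA3 category is generated by $\alpha_p$ and $\mu_p$-type pieces locally, it suffices to check $G=\alpha_p$ ($\rho=0$) and $G=\mu_p$ ($\rho$ the identity of $\mc O$). For $\mu_p$ this is Bhatt's computation \cite[4.4.7]{Bhattnotes}, where $\mc M(\mathbf Z/p)=\mc O/p$ is the twisted $\mc O\{0\}$. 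For $\alpha_p$ it is a direct transmutation computation with $\mathbf G_a^\sharp$. Since the identification must be natural in $\rho$, I would rather argue universally: work over the stack $[\mathbf V/\ldots]$ classifying pairs $(\mc E,\rho)$, or use that $\mathrm{Hom}$ in both categories is computed by the same maps.

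The main obstacle is making the identification \emph{canonical and functorial} rather than local. Concretely, the task is to produce a natural map $\mc M^\syn_{(\mc E,\rho)}\to\mc M(G^*)$ and then check it is an isomorphism locally. To construct it, I would use the Hom-description: $\mc M(G^*)$ corepresents $\mathcal{H}om(G^*,-)$ on $\F$-gauges via the transmuted group $\mathbf G_a^{\mathrm{syn}}$. A map out of the cocone \eqref{E:1.2.1} is then given by three maps out of pushforwards, by adjunction with $j_{t}^*,j_u^*,j_0^*$. These are produced from the canonical sections $\mc E=\mathscr Lie_G\to$ (functions on $G^*$). Checking that the homotopy on $X_0^{\mc N}$ matches $\rho$ is exactly the content of the Frobenius-twist identity \eqref{E:Ftwist}, together with the definition of $\rho$ as the $p$-power map on $\mathscr Lie_G$. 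Once the map exists, isomorphism can be checked étale locally and after passing to fibres, reducing to the $\alpha_p,\mu_p$ cases above.
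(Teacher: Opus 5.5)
There is a genuine gap, and it sits where the content of the theorem lies. You plan to apply the restriction (Milnor) sequence to $\mathcal F=\mathcal M(G^*)$ and show $j_t^*\mathcal F\simeq\pi_t^*\mathcal E$ and $j_u^*\mathcal F\simeq\pi_u^*\mathcal E$, with the comparison over $X_0^{\mathcal N}$ given by $\rho$. This is internally inconsistent. In the restriction sequence of a single object, $i_t^*j_t^*\mathcal F$ and $i_u^*j_u^*\mathcal F$ are both canonically $j_0^*\mathcal F$. So the induced comparison $i_t^*\pi_t^*\mathcal E\to i_u^*\pi_u^*\mathcal E$ would be an isomorphism, whereas $\rho|_{X_0^{\mathcal N}}$ need not be; for $\alpha_p$, $\rho=0$. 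In $\mathcal M^{\mathcal N}_{(\mathcal E,\mathcal E,\rho)}$ the terms $\pi_t^*\mathcal E$ and $\pi_u^*\mathcal E$ are gluing pieces along a non-invertible map, not restrictions. Concretely, for $G=G^*=\alpha_p$ over $k$ the gauge is $k[u]e_1\oplus k[t]e_0$, with $e_i$ of weight $i$. Its restriction to $X_t^{\mathcal N}=\{u=0\}$ is therefore $k[t]e_0\oplus k e_1$, not $\pi_t^*\mathcal E$. The $u$-side behaves similarly, and derived restrictions add further Tor terms. BBM only describes the de Rham realization, i.e.\ the open locus $t\neq 0$, not all of $X_t^{\mathcal N}$. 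So the ``expected'' identifications fail, and so does the $\alpha_p$ check as you frame it.

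What survives is the construction in your last paragraph. You pull back $\mathcal E=\mathscr{L}ie_G\simeq\mathscr{H}om(G^*,\mathbf G_a)\simeq R^1\pi_*\mathcal O_{BG^*}$ along $\pi_t^{BG^*},\pi_u^{BG^*},\pi_0^{BG^*}$, use $\pi_t\circ i_t=F_X\circ\pi_u\circ i_u$ for the square over $X_0^{\mathcal N}$, and take cocones. This is the paper's map, but it is a map of diagrams into the restriction diagram of the target (Lemma~\ref{L:6.5}). It does not come ``by adjunction'' out of the cocone, which goes the wrong way.

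The real issue is proving this map is an isomorphism, and there your argument does not go through. Height-one groups are not \'etale locally built from $\alpha_p$ and $\mu_p$: for $(\mathcal O, f\cdot\mathrm{Frob})$ the type jumps where $f$ vanishes. Over an algebraically closed field they are only iterated extensions of these, so you would need a five-lemma d\'evissage for a natural map, which you gesture at but do not carry out. The $\alpha_p$ case for your specific map is never computed. You also need to pass from points of $X^{\mathcal N}$ to points of $X$; the paper does this via derived Nakayama and the fact that the Hodge--Tate locus is a gerbe over $X$.

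The missing idea is the paper's isomorphism criterion. At a point, a map in $\PG(\Sp k)$ is an isomorphism iff it is one on the de Rham realization (Corollary~\ref{C:isocheck}, from Lemma~\ref{L:5.2}). The de Rham realization of the constructed map is then identified with BBM's map $\Phi^*\mathscr{L}ie\to M_{\crys}$ (Propositions~\ref{P:0.1} and~\ref{P:4.10}), which BBM prove is an isomorphism (Corollary~\ref{C:4.14}). You cite BBM, but you never show your map agrees with theirs, and that is where the actual work lies.
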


In addition to proving this theorem we discuss how this yields a different perspective on points (1) and (2) above.

\subsection{Comparison with the crystalline theory}
We can also consider the de Rham stack $X^\dR  = \phi ^*X^\Prism $.  Let $\tilde j_\dR :X^\dR \rightarrow X^{\mc N}$ be the twist of $j_\dR $ be the semilinear isomorphism $X^\dR \rightarrow X^\Prism $. For any object $U\hookrightarrow T$ of the crystalline site of $X/W(k)$ there is a natural map $g_T:T\rightarrow X^\dR $ (see section \ref{S:section4} for more discussion), and therefore we can consider the crystal defined by sending $T$ to $g_T^*\mc M(G^*)$.  By \cite[2.2.6]{mondal2025dieudonnetheoryclassifyingstacks} this crystal agrees with crystalline Dieudonn\'e module $M_\crys (G^*)$ defined in \cite{MR667344}.

\begin{thm} 
The crystal defined by $(U\hookrightarrow T)\mapsto g_T^*\mls M_{(\mc E, \rho )}$ is canonically isomorphic to $\Phi ^*\mc E$, where $\Phi :(X/k)_\crys \rightarrow X_\et $ is the canonical morphism of topoi defined in \cite[4.3.4]{MR667344}, and the induced isomorphism $M_\crys (G^*)\simeq \Phi ^*\mc E$ agrees with the one in \cite[4.3.6]{MR667344}.
\end{thm}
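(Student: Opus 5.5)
Since $g_T$ factors through $X^\dR \simeq X^\Prism$ (up to the Frobenius twist), the plan is to pull $\mc M^\syn_{(\mc E,\rho)}$ back along $\tilde j_\dR :X^\dR\to X^{\mc N}$ and compute it using the explicit cocone \eqref{E:1.2.1}. By \ref{P:1.3}, $j_\dR^*\mc M^{\mc N}_{(\mc E,\mc E,\rho)}\simeq j^\Prism_*\pi^*\mc E$. After the semilinear twist this becomes $\tilde j_\dR^*\mc M\simeq j^\dR_*\pi_\dR^*\mc E$, where $j^\dR:X^\dR_{p=0}\hookrightarrow X^\dR$ is the locus $p=0$ and $\pi_\dR:X^\dR_{p=0}\to X$ is the Frobenius twist of $\pi$. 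The first step is to verify this twist carefully. The map $R:W/p\to\mathbf{G}_a$ becomes, after pulling back by $\phi$, a map whose restriction to the reduction $\mathbf{G}_a^\sharp\to\mathbf{G}_a$ is compatible with Frobenius. This is where the $F$ in $\Phi_T$ will come from.

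The second step is to compute $g_T^*$ for a PD-thickening $U\hookrightarrow T$ over $W(k)$. The derived pullback of $j^\dR_*(-)$ involves $T\times_{X^\dR}X^\dR_{p=0}$, which is the derived fibre $T\otimes^L_{\mathbf Z}\mathbf F_p$. We will first restrict to $T$ over $k$, i.e.\ $pT=0$, since the crystal is claimed to be pushed from $(X/k)_\crys$. There the derived fibre splits off $T$ and an extra $\mathrm{Tor}_1$ term. So we must interpret $\Phi^*\mc E$ as the pushforward along $(X/k)_\crys\to(X/W)_\crys$, which is exactly the (derived) restriction along $T\otimes\mathbf F_p$.

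The key claim is that the composite $T\to X^\dR_{p=0}\xrightarrow{\pi_\dR}X$ equals $\Phi_T:T\to U\to X$. This is an identity of maps of ring stacks. The point $g_T$ corresponds to the map $\mathcal O_T\to\mathbf G_a^\dR=\mathbf G_a/\mathbf G_a^\sharp$ lifting $U$. Composing with $\pi_\dR$ amounts to applying Frobenius, which kills $\mathbf G_a^\sharp$ in characteristic $p$ because $x^p=p!\,x^{[p]}=0$. This is precisely the factorization $F_T=\Phi_T$ through $U$ coming from the divided powers. I will check it on the universal PD-thickening, i.e.\ on divided power envelopes of smooth embeddings, and use functoriality for the rest. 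That gives a canonical isomorphism $g_T^*\mc M_{(\mc E,\rho)}\simeq\Phi_T^*\mc E$, functorial in $T$. For $T$ over $W/p^n$ one applies the same argument, and the result is the pushforward description.

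The main obstacle is the last assertion: matching this isomorphism, composed with $\mc M(G^*)\simeq\mc M^\syn_{(\mc E,\rho)}$ from Theorem~\ref{T:theorem1} and with \cite[2.2.6]{mondal2025dieudonnetheoryclassifyingstacks}, against the isomorphism of \cite[4.3.6]{MR667344}. First, I would reduce by functoriality and étale descent to a universal case: $X$ the moduli of height one groups, or locally $\mc E$ trivial, $G$ a product of $\alpha_p$'s and $\mu_p$'s twisted by $\rho$. Second, since both isomorphisms are natural in $G$ and compatible with the Frobenius and Verschiebung structures, I would compare them on $\mu_p$ and $\alpha_p$ directly. For $\mu_p$ the comparison is the known one, via $\mathrm{dlog}$, as in \cite[4.4.7]{Bhattnotes}. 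Third, I would argue that any two such natural isomorphisms differ by an automorphism of the functor $\mc E\mapsto\Phi^*\mc E$ compatible with $\rho$. Such an automorphism must be a scalar in $\mathbf F_p^\times$ that is locally constant. The scalar is then pinned down by the $\mu_p$ case.
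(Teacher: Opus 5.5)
Your first half matches the paper. Identifying $g_T^*\mc M^{\syn}_{(\mc E,\rho)}$ with $\Phi_T^*\mc E$ is the lemma in \S\ref{S:section4} that computes $\Phi^*\mls G$ as $(U\hookrightarrow T)\mapsto\Gamma(T,g_T^*\pi_t^{\crys*}\mls G)$, and it rests on your observation that Frobenius kills $\mathbf{G}_a^\sharp$ in characteristic $p$. However, you test on the wrong objects. When $pT=0$, the $\mathrm{Tor}_1$ term you found really does contribute a nonzero $H^{-1}$ to the derived pullback, while $z_{\crys*}\Phi^*\mc E$ evaluated on $T$ is just $\Phi_T^*\mc E$. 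No reinterpretation of $\Phi^*\mc E$ removes that term. Instead, evaluate on $\mathbf{Z}_p$-flat objects, such as the $p$-completed PD envelopes $D$ of Proposition \ref{P:0.1} or the Frobenius-lifted $\widetilde U_i$ of Remark \ref{R:3.3}. These determine a crystal, and on them $\mc O_T\otimes^{\mathbf L}_{\mathbf Z}\mathbf{F}_p=\mc O_T/p$. Alternatively, work with the underived sheaf $M(G)$ of \S3.

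The genuine gap is the compatibility with \cite[4.3.6]{MR667344}. This is the real content of the statement, and your plan never makes contact with BBM's construction.

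First, height-one groups are not \'etale locally products of $\alpha_p$'s and $\mu_p$'s. Over $\Sp k[a]$, the pair $(\mls O,\rho=a)$ is \'etale locally $\mu_p$ where $a$ is a unit and is $\alpha_p$ at $a=0$; the rank of $\rho$ jumps.

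Second, the rigidity claim is only asserted. Working out naturality plus base change on the universal pair over the space of $n\times n$ matrices, using Lang's theorem, gives a constant in $k^\times$. Cutting this down to $\mathbf{F}_p^\times$ would need both isomorphisms to respect Frobenius, which you have not checked.

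Third, that rigidity lemma is provable, but it only reduces you to $G=\mu_p$. For that base case you cite \cite[4.4.7]{Bhattnotes}, which compares $R\Gamma(X^\syn,\mls O\{1\})$ with flat cohomology of $\mu_p$ via $\mathrm{dlog}$. It says nothing about BBM's map $\rho'$. So even the base case still requires unwinding BBM's construction against the stacky one, which is exactly the work the paper does, uniformly in $G$.

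The missing idea is BBM's own characterization of their map through the isomorphism $\texttt{"p"}:\mls Ext^1(\underline G,\mls O_{X/k})\to\mls Hom(\underline G,\mls O_{X/k})$ of \cite[4.3.8]{MR667344}: the composite $\texttt{"p"}\circ\rho'$ is the map induced by $\Phi^b$. The paper proves the same for $\rho$. This is possible because the isomorphism of Theorem \ref{T:theorem1} is not a black box: it is built from pullbacks along $\pi_t^{BG},\pi_u^{BG}$, so on the crystalline locus it is the map \eqref{E:4.6.1}. The argument then runs as follows.
\begin{enumerate}
\item Proposition \ref{P:0.1} identifies $\mls Ext^1(\underline G,\mls O_{X/W})$ with $R^2\pi_*\mls O_{BG/W}$ on PD envelopes.
\item $\rho$ is expressed through the tautological class $\xi$ and the Bockstein $\bar\partial$ of $Ru_*\mls O_{X/W}\xrightarrow{p}Ru_*\mls O_{X/W}\to R\Phi_*\mls O_{X/k}$.
\item One checks that $\texttt{"p"}\circ\bar\partial=\mathrm{id}$, which yields $\rho=\rho'$ (Proposition \ref{P:4.10}).
\end{enumerate}
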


\subsection{The Hoobler sequence}

By \cite[Theorem B]{madapusi2025perfectfgaugesfiniteflat} we have an isomorphism
$$
R\Gamma (X_{\text{fppf}}, G)\simeq R\Gamma (X^\syn , \mc M(G^*)\{1\}),
$$
and therefore by \ref{T:theorem1} also an isomorphism 
$$
R\Gamma (X_{\text{fppf}}, G)\simeq R\Gamma (X^\syn , \mc M^\syn_{(\mc E, \rho )}\{1\}).
$$
\begin{thm}\label{T:theorem3} For $m\geq 0$ there is a canonical isomorphism
$$
R\Gamma (X^\syn , \mc M^\syn _{(\mc E, \rho )}\{m\})[m]\simeq R\Gamma (X, \xymatrix{\mc E\otimes F_{X*}Z\Omega ^m_{X}\ar[r]^-{\rho - C}& \mc E\otimes \Omega ^m_X}),
$$
which for $m=1$ recovers the isomorphism in \cite[1.19]{BraggOlssonI} using \cite[Theorem B]{madapusi2025perfectfgaugesfiniteflat}.  Here $C$ denotes the map defined by the Cartier isomorphism, and we somewhat abusively write simply $\rho $ for the semilinear map induced by the inclusion of forms and the map $\rho $.
\end{thm}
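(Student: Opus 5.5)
We compute syntomic cohomology through the standard fiber sequence for $X^\syn$ as a coequalizer of $j_{\dR}, j_{HT}: X^\Prism \rightrightarrows X^{\mc N}$:
$$
R\Gamma (X^\syn , \mc M\{m\})\rightarrow R\Gamma (X^{\mc N}, \mc M\{m\})\xrightarrow{\ \phi -\mathrm{can}\ } R\Gamma (X^\Prism , j_{\dR}^*\mc M\{m\}).
$$
The first step is to apply this to $\mc M=\mc M^\syn_{(\mc E,\rho )}$. Its pullback to $X^{\mc N}$ is the cocone (\ref{E:1.2.1}), so $R\Gamma (X^{\mc N}, \mc M\{m\})$ is the cocone of
$$
R\Gamma (X_t^{\mc N}, \pi _t^*\mc E\{m\})\oplus R\Gamma (X_u^{\mc N},\pi _u^*\mc E\{m\})\rightarrow R\Gamma (X_0^{\mc N}, i_u^*\pi _u^*\mc E\{m\}).
$$
Similarly, by \ref{P:1.3} the target term is $R\Gamma (X^\Prism_{p=0}, \pi ^*\mc E\{m\})$. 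The problem thus reduces to computing cohomology of twists of pulled-back vector bundles on $X_t^{\mc N}$, $X_u^{\mc N}$, $X_0^{\mc N}$ and $X^\Prism_{p=0}$. Since every sheaf is pulled back from $X$, the projection formula reduces each term to $\mc E\otimes R\pi _*\mc O\{m\}$ for the relevant $\pi$.

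The second step is to identify these pushforwards. On $X^\Prism_{p=0}$ the pushforward along $\pi$ gives mod-$p$ prismatic cohomology, which is the conjugate-filtered de Rham complex $F_{X*}\Omega ^\bullet _X$. On $X_0^{\mc N}$ the $\mathbf{G}_m$-weight-$m$ part gives the Hodge–Tate/conjugate graded piece, hence $\Omega ^m_X[-m]$ after twisting. On $X_u^{\mc N}$ we get the conjugate filtration in weight $m$, namely $\tau ^{\leq m}F_{X*}\Omega ^\bullet _X$. On $X_t^{\mc N}$ we get the Hodge-type piece: the weight-$m$ part of the Rees construction of the Nygaard filtration mod $p$, which should be $F_{X*}\Omega ^{\geq m}$ (or its stupid truncation). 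We will then use the $m$-th Breuil–Kisin twist to pin down the weights.

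The third step is to assemble these pieces. The map from the $X_u^{\mc N}$ term to the $X_0^{\mc N}$ term is the projection $\tau ^{\leq m}\rightarrow \mathcal H^m[-m]$ composed with the inverse Cartier isomorphism. The map from the $X_t^{\mc N}$ term is $\rho$ composed with the inclusion of forms, using (\ref{E:Ftwist}). The maps $\phi$ and $\mathrm{can}$ to $X^\Prism$ should cancel the lower-degree parts of the $\tau ^{\leq m}$ and $\Omega ^{\geq m}$ pieces against the de Rham complex, because both are canonically compatible with the inclusions into $F_{X*}\Omega ^\bullet$. After these cancellations the total complex should collapse to
$$
[\mc E\otimes F_{X*}Z\Omega ^m\xrightarrow{\rho - C}\mc E\otimes \Omega ^m][-m],
$$
where $Z\Omega ^m$ arises as the intersection of $\Omega ^{\geq m}$ (closed in degree $m$) with $\tau ^{\leq m}$. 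For $m=1$ we compare with \cite[1.19]{BraggOlssonI} by checking that the two maps agree on $\mu _p$-type local descriptions, for example via $d\log$.

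I expect the main obstacle to be the bookkeeping in the third step. The twist $\{m\}$ and the weight conventions on $X_t^{\mc N}$ and $X_u^{\mc N}$ must be matched exactly, so that the non-closed forms and the lower cohomology actually cancel and do not survive in the total complex. Making this precise will likely require an explicit model, such as a Čech or local computation on a framed affine, together with verifying that $\phi$ restricted to these strata is the relative Frobenius/Cartier map.
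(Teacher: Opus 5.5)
Your proposal is correct and is essentially the paper's argument. The paper also identifies the contributions of $X_t^{\mc N}$, $X_u^{\mc N}$ and $X_0^{\mc N}$ with $\mc E\otimes\mathrm{Fil}^m_{Hodge}\Omega^\bullet_X$, $\mc E\otimes\mathrm{Fil}^m_{conj}\Omega^\bullet_X$ and $\mc E\otimes\Omega^m_X[-m]$, with restriction maps $\rho$ and $C$. It then takes the fiber against the de Rham term coming from the gluing of the two copies of $X^\Prism$, and observes that $\Omega^{\ge m}\oplus\tau^{\le m}\Omega^\bullet\to\Omega^\bullet$ is termwise surjective and an isomorphism outside degree $m$, so its fiber is $F_{X*}Z\Omega^m_X[-m]$, which is exactly your ``intersection.''

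Two small bookkeeping slips. The restriction from $X_u^{\mc N}$ to $X_0^{\mc N}$ is the projection to $\mathcal H^m[-m]$ followed by the Cartier operator $C$ itself, not $C^{-1}$. The restriction from $X_t^{\mc N}$ is $\rho$ composed with the projection $\Omega^{\ge m}\to\Omega^m[-m]$, which only becomes the inclusion $Z\Omega^m\subset\Omega^m$ after passing to the fiber.
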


\begin{rem} As discussed in \cite[3.17]{BraggOlssonI}, one gets from \ref{T:theorem3} also a derived version for singular schemes by Kan extension.
\end{rem}

\subsection*{Prerequisites}
We assume that the reader is familiar with the basic theory of geometrization of prismatic cohomology as discussed in \cite{Bhattnotes}.

\subsection*{Acknowledgements} During the preparation of this article, SM was partially supported by a grant from Purdue University; MO was partially supported  by NSF FRG grant DMS-2151946 and the Simons Collaboration on Perfection in Algebra, Geometry, and Topology. The authors are thankful to IHES for its hospitality, where this collaboration started.

\section{The maps $\pi _t$ and $\pi _u$}\label{S:section2}

\begin{pg} We discuss some of the necessary constructions that will be used later. Let $X$ be a $k$-scheme.  Recall that $X^{\mc N}$ is defined by transmutation from a ring stack $\mathbf{G}_a^{\mc N}$ over $k^{\mc N} = [(\Spf (W[u, t]/(ut-p))/\mathbf{G}_m]$, where $u$ has weight $1$ and $t$ has weight $-1$.  For a $p$-nilpotent ring $R$ the groupoid $k^{\mc N}(R)$ is the groupoid of triples $(\mc L, u_{\mc L}, t_{\mc L})$, where $\mc L$ is a line bundle on $\Sp (R)$, and $u:\mls O_{\Sp (R)}\rightarrow \mc L$ and $t:\mc L\rightarrow \mls O_{\Sp (R)}$ are maps of line bundles such that $ut = p$ on $\mls O_{\Sp (R)}$.  Geometrically, we can view these as maps of functors $u_{\mc L}:\mathbf{G}_a\rightarrow \mathbf{V}(\mc L)$ and $t_{\mc L}:\mathbf{V}(\mc L)\rightarrow \mathbf{G}_a$, where $\mathbf{V}(\mc L)$ is the functor sending $g:\Sp (A)\rightarrow \Sp (R)$ to $\Gamma (\Sp (A), g^*\mc L)$.  Taking the divided power envelope of the origin we then also get maps
$$
u^\sharp :\mathbf{G}_a^\sharp \rightarrow \mathbf{V}(\mc L)^\sharp, \ \ t^\sharp :\mathbf{V}(\mc L)^\sharp \rightarrow \mathbf{G}_a^\sharp .
$$
The ring stack $\mathbf{G}_a^{\mc N}$ is defined by considering diagram 
\begin{equation}\label{E:definingdiagram}
\xymatrix{
0\ar[r]& \mathbf{G}_a^\sharp \ar@/_2pc/[dd]_-p\ar[r]\ar[d]^-{u^\sharp }& W\ar[d]\ar@/_2pc/[dd]_-p\ar[r]^-{\F}& \F_*W\ar@{=}[d]\ar[r]& 0\\
0\ar[r]& \mathbf{V}(\mc L)^\sharp \ar[d]^-{t^\sharp }\ar[r]& M\ar[r]\ar[d]^-d& \F _*W\ar[d]^-p\ar[r]& 0\\
0\ar[r]& \mathbf{G}_a^\sharp \ar[r]& W\ar[r]^-{\F }& \F_*W\ar[r]& 0,}
\end{equation}
where $(\mc L, u, t)$ denotes the universal  triple over $k^{\mc N}$, and setting $\mathbf{G}_a^{\mc N}:= \text{cone}(M\rightarrow W).$  Here all functors are viewed as sheaves for the fpqc topology and the pushforwards indicate the $W$-module structure; that is, $\F _*W$ associates to a $p$-nilpotent ring $R$ the ring $W(R)$, viewed as a $W(R)$-algebra by the Frobenius morphism $\F:W(R)\rightarrow W(R)$. 

The stack $X^{\mc N}$ is defined by transmutation from $\mathbf{G}_a^{\mc N}$.  So for a $p$-nilpotent algebra $R$ with a map $\Sp (R)\rightarrow k^{\mc N}$ we have $X^{\mc N}(R) = \text{Map}(\Sp (\mathbf{G}_a^{\mc N}(R)), X)$. Over the locus $k_{u\neq 0}^{\mc N} = \Sp (W)$ we have $\mathbf{G}_a^{\mc N}\simeq W/p$ and therefore we have $X^{\mc N}_{u\neq 0}\simeq X^\Prism $.  The resulting open immersion is denoted $j_{HT}:X^{\Prism }\hookrightarrow X^{\mc N}$.  Over the locus $k_{t\neq 0}^{\mc N}$ we have $\mathbf{G}_a^{\mc N}\simeq \F _*W/p$ and therefore $X^{\mc N}_{t\neq 0}\simeq \sigma ^*X^\Prism $.  Composing with the $\sigma $-linear isomorphism $\sigma ^*X^\Prism \rightarrow X^\Prism $, we get another inclusion $j_{dR}:X^\Prism \hookrightarrow X^{\mc N}$.  The stack $X^{\syn}$ is obtained as the pushout (gluing) of the diagram
\begin{equation}\label{E:gluing}
\xymatrix@C=5pc{
X^\Prism \coprod X^\Prism \ar[d]^{\text{id}\coprod \text{id}}\ar[r]^-{j_{HT}\coprod j_{dR}}& X^{\mc N}\\
X^\Prism .&}
\end{equation}
\end{pg}

\begin{pg}[The map $\pi _t$] 
The map $F:W\rightarrow F_*W$ induces upon passage to the quotient a map $\mathbf{G}_{a}^{\mc N}\rightarrow F_*W/p$.  Composing this with the map $W/p\rightarrow \mathbf{G}_a$, defined when $p = 0$, we get a map $\mathbf{G}_a^{\mc N}\rightarrow \mathbf{G}_a$ over $k_{p=0}^{\mc N}$.  The map $\pi _t:X_t^{\mc N}\rightarrow \phi ^*X\simeq X$ is defined to be the map induced by transmutation from the restriction of this map to $k_t^{\mc N}$.
\end{pg}

\begin{pg}[The map $\pi _u$]
Over the locus $k_u^{\mc N} = k_{t=0}^{\mc N}\subset k^{\mc N}$ we have a map $\alpha _u:\mathbf{G}_a^{\mc N}|_{k_u^{\mc N}}\rightarrow \mathbf{G}_a$ induced by the commutative           diagram
$$
\xymatrix{
\mathbf{G}_a^\sharp \ar[r]\ar[d]^-{u^\sharp }\ar[d]& W\ar[rdd]^-0\ar[d]^-d& \\
\mathbf{V}(\mc L)^\sharp \ar[rd]_-0\ar[r]& M\ar[d]^-d& \\
& W\ar[r]^-{R}& \mathbf{G}_a.}
$$
By transmutation this map defines a morphism $\pi _u:X_u^{\mc N}\rightarrow X$ for any $k$-scheme $X$.
\end{pg}

\begin{lem}
The equalities \eqref{E:Ftwist} hold.
\end{lem}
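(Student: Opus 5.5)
Since $X_0^{\mc N}$, $X_t^{\mc N}$, $X_u^{\mc N}$ and the maps $\pi_t,\pi_u,i_t,i_u$ are all obtained by transmutation from ring stacks over $k^{\mc N}_{p=0}$, I will reduce the equality $\pi_t\circ i_t = F_X\circ\pi_u\circ i_u$ to an identity of maps of ring stacks $\mathbf{G}_a^{\mc N}|_{B\mathbf{G}_{m,k}}\to\mathbf{G}_a$. Transmutation is functorial in the ring stack. Moreover, the absolute Frobenius $F_X$ is induced by transmutation from the Frobenius endomorphism $x\mapsto x^p$ of $\mathbf{G}_a$ over $\mathbb{F}_p$: for an $\mathbb{F}_p$-algebra $A$, precomposing with $\mathrm{Frob}_A$ is the map $X(A)\to X(A)$ given by composition with $F_X$. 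It therefore suffices to show that, over $B\mathbf{G}_{m,k}=\{u=t=0\}$, the restriction of the map $\alpha_t:\mathbf{G}_a^{\mc N}\to F_*W/p\to\mathbf{G}_a$ defining $\pi_t$ agrees with $\mathrm{Frob}\circ\alpha_u$, where $\alpha_u$ defines $\pi_u$. I also need to keep track of the identification $\phi^*X\simeq X$ used in defining $\pi_t$, since this is where a Frobenius twist could be absorbed or introduced.

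To compare the two maps, I will compute both on the presentation $\mathbf{G}_a^{\mc N}=\mathrm{cone}(M\xrightarrow{d}W)$. The map $\alpha_u$ is by definition $R:W\to\mathbf{G}_a$ on the $W$-term and zero on $M$. The map $\alpha_t$ comes from the right-hand column of \eqref{E:definingdiagram}: it sends $x\in W$ to $R(F(x))$ and restricts to $M\to F_*W\xrightarrow{p}F_*W$ on $M$. On the $W$-terms, then, $\alpha_t$ is $R\circ F$ and $\alpha_u$ is $R$. In characteristic $p$ we have $R\circ F=\mathrm{Frob}\circ R$, because the zeroth Witt component of $F(x)$ is $x_0^p$. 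This gives the desired identity on the degree-zero terms.

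The remaining step, which I expect to be the main obstacle, is to verify that the two maps also agree as maps out of the cone; that is, the nullhomotopies on $M$ must match. For $\alpha_u$ the nullhomotopy is $0$, which is legitimate because $R\circ d=0$ when $t=0$. For $\alpha_t$ the nullhomotopy is the composite $M\to F_*W\xrightarrow{p}F_*W\to F_*W/p$, which vanishes after reduction mod $p$. I will check that $\mathrm{Frob}\circ 0$ and this composite, pushed forward to $\mathbf{G}_a$, define the same homotopy. Both are literally zero as maps $M\to\mathbf{G}_a$. Since $\mathbf{G}_a$ is discrete, a map from a $1$-truncated cone of this form is determined by a map on $W$ killing $d(M)$, so no higher coherence data remains. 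Having established that, I will conclude by transmutation. Finally, I will check that the map $\pi_u$ appearing in \eqref{E:Ftwist} (the statement writes $\pi_i$, which I take to be a typo for $\pi_u$) is the one the formula intends, and that the identification $\phi^*X\simeq X$ introduces no additional Frobenius twist. This last verification is where the direction of the twist, $F_X$ on the $u$-side rather than the $t$-side, must come out correctly.
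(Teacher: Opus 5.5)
Your proposal is correct and is essentially the paper's argument: the paper observes that over $k_0^{\mc N}$ the map $\alpha_t$ equals $\alpha_u$ precomposed with the Witt vector Frobenius, which is the same as your $\mathrm{Frob}\circ\alpha_u$ because $R\circ F=\mathrm{Frob}\circ R$ in characteristic $p$, and then it applies transmutation. Your extra checks are sound. Because $\mathbf{G}_a$ is discrete, a map out of $\mathrm{cone}(M\xrightarrow{d}W)$ is determined by its restriction to $W$, so the homotopy data on $M$ play no role. The $\phi^*X\simeq X$ bookkeeping is harmless if you transmute the underlying maps of sheaves of rings and regard all stacks as absolute.
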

\begin{proof}
     The restriction of $\alpha _t$ to $k_{0}^{\mc N}:= k_{t=u=0}^{\mc N}$ is equal to $\alpha _u|_{k_0^{\mc N}}\circ \F$.  Applying transmutation to this we obtain the equality \ref{E:Ftwist}.
\end{proof}

\begin{rem}
    Let us explain a cohomological perspective on the above constructions. By \cite{Bhattnotes}, one can identify $X^\mathcal{N}_{t} \simeq X^{\mathrm{dR}, +}$. Suppose that $X := \mathrm{Spec}\, R$ for some quasiregular semiperfect ring $R$. Then $X^{\mathrm{dR},+} \simeq \mathrm{Spec}\,\left(\bigoplus_i \mathrm{Fil}^i_{\mathrm{Hodge}}R\Gamma_{\mathrm{dR}}(R)\right)/\mathbf{G}_m.$ The map $X^{\mathrm{dR},+} \to X$ induced by $\pi_t$ constructed above is induced by the map of graded rings $R \to \bigoplus_i \mathrm{Fil}^i_{\mathrm{Hodge}}R\Gamma_{\mathrm{dR}}(R)$, induced by the canonical map $R \simeq \Fil^0_{\mathrm{conj}} R\Gamma_{\mathrm{dR}} (R) \to R\Gamma_{\mathrm{dR}}(R)$. Here we view $R$ as a graded ring concentrated in weight $0$. By quasisyntomic descent, for a quasisyntomic scheme $X$, the map $\pi_t$ is determined by this concrete description  in the case of quasiregular semiperfect algebras. One can give a similar perspective on the map $\pi_u: X^{\mc N}_u \to X$ for any quasisyntomic $k$-scheme $X$ by using the isomorphism of $k$-algebra stacks $\phi^* X^{\mc N}_u \simeq X^{\mathrm{dR}, c}$ (see \cite[2.8.3]{Bhattnotes}).
\end{rem}{}

\section{Observations about Dieudonn\'e modules}

\begin{pg}
Following \cite[3.5.26]{mondal2025dieudonnetheoryclassifyingstacks} let $\PG (X)$ denote the category of objects $K\in \mls D_\qcoh (X^\syn )$ for which the Hodge-Tate weights are in $[0,1]$ and such that $K$ can quasi-syntomic locally be represented by a two-term complex $\alpha :\mc V^{-1}\rightarrow \mc V^0$ of vectors bundles placed in degrees $-1$ and $0$, and with the map $\alpha $ an isomorphism after inverting $p$.  One of the fundamental results of \cite[3.5.26]{mondal2025dieudonnetheoryclassifyingstacks} is that  the Dieudonn\'e functor\footnote{Note that we compose the functor in loc. cit. with duality to make it covariant.}
$$
\text{FFG}(X)\rightarrow \PG (X), \ \ G\mapsto \mc M(G^*)
$$
is an equivalence of categories, where $\text{FFG}(X)$ denotes the category of finite locally free abelian group schemes $G/X$ of $p$-power order.

\end{pg}

\begin{rem} The conventions about Hodge-Tate weights are not consistent in the literature.  We are following the convention that the Breuil-Kisin twist $\mc O(1)$ has Hodge-Tate weight $-1$, whereas in the article \cite{madapusi2025perfectfgaugesfiniteflat} the convention is that $\mc O(1)$ has Hodge-Tate weight $1$.  To account for this difference the functor in \cite[Theorem A, Remark 1.1.4]{madapusi2025perfectfgaugesfiniteflat} incorporates a Breuil-Kisin twist.
\end{rem}

\begin{lem}\label{L:3.2} Let $X/k$ be a quasi-syntomic $p$-adic formal scheme. Then $X^\Prism $, $X^{\mc N}$ and $X^\syn $  are flat over $\mathbf{Z}_p$.  
\end{lem}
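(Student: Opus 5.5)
The plan is to reduce flatness over $\mathbf{Z}_p$ to a local statement on quasiregular semiperfectoid (qrsp) covers, where each stack becomes explicit. Flatness over $\mathbf{Z}_p$ of a $p$-adic formal stack means that the structure sheaf, evaluated on a flat cover, has no $p$-torsion and is $p$-completely flat. Since $X$ is quasi-syntomic, it admits a quasi-syntomic cover by affine formal schemes $\Spf (S)$ with $S$ qrsp. By quasi-syntomic descent, the induced maps $\Spf(S)^\Prism \rightarrow X^\Prism $, $\Spf(S)^{\mc N}\rightarrow X^{\mc N}$ and $\Spf(S)^\syn \rightarrow X^\syn $ are flat covers: they are quasi-syntomic surjections, which is the standard descent statement for prismatization and its variants. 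Flatness over $\mathbf{Z}_p$ is fpqc local on the source, so it suffices to treat $X=\Spf (S)$ with $S$ qrsp.

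For such $S$, I would use the known explicit descriptions.
\begin{enumerate}
\item $\Spf (S)^\Prism \simeq \Spf (\Prism _S)$. Here $\Prism _S$ is the initial prism of $S$, which is $p$-torsion free: it is a $p$-completely flat $\delta$-ring, being a $p$-completed filtered colimit or derived completion of flat ones. This gives flatness of $X^\Prism $.
\item $\Spf (S)^{\mc N}\simeq \Spf \bigl(\bigoplus _{i\in \mathbf{Z}} \Fil ^i_{\mc N}\Prism _S\, t^{-i}\bigr)/\mathbf{G}_m$, the Rees stack of the Nygaard filtration. Each $\Fil ^i_{\mc N}\Prism _S$ is a submodule of the $p$-torsion free ring $\Prism _S$, so the Rees algebra is a subring of $\Prism _S[t^{\pm 1}]$ and hence $p$-torsion free. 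One must also check $p$-complete flatness, i.e.\ that the derived mod-$p$ reduction is discrete. This follows from torsion freeness together with $p$-adic completeness of the graded pieces, using that $\Fil ^i_{\mc N}/\Fil ^{i+1}_{\mc N}$ is $p$-completely flat over $S$-type rings in the qrsp case. Quotienting by $\mathbf{G}_m$ preserves flatness, so $X^{\mc N}$ is flat.
\end{enumerate}

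For $X^\syn $, I would use the gluing diagram \eqref{E:gluing}: $X^{\mc N}\rightarrow X^\syn $ is a flat (indeed \'etale-locally open-immersion-glued) cover. Concretely, $X^\syn $ is the quotient of $X^{\mc N}$ by the étale equivalence relation identifying $j_{HT}$ and $j_{dR}$. Its flat cover $X^{\mc N}\sqcup X^\Prism $, or simply $X^{\mc N}$ after noting $j_{HT}$ is an open immersion, is flat over $\mathbf{Z}_p$, and flatness descends along flat surjections.

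The main obstacle is the $p$-complete flatness of the Nygaard Rees algebra (as opposed to mere torsion freeness) and justifying that the comparison maps are flat covers. To handle this, I would cite Bhatt's notes for the identification of $\Spf(S)^{\mc N}$ with the Rees stack and the flatness of $\Spf(S)^{\mc N}\rightarrow X^{\mc N}$. For flatness of the Rees algebra, I would use that $\Prism _S/\Fil ^i_{\mc N}$ is $p$-torsion free for qrsp $S$, which gives torsion freeness of each graded quotient and hence flatness over $\mathbf{Z}_p$.
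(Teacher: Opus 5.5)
Your argument is essentially correct, but it takes a different route from the paper. The paper does no local computation. It quotes a general flatness result for prismatizations of quasi-syntomic formal schemes (Pentland, Cor.~2.21). It then composes with the evident flatness over $\mathbf{Z}_p$ of $k^{\mc N}=[\Spf(W(k)[u,t]/(ut-p))/\mathbf{G}_m]$, and hence of $k^{\syn}$, which is obtained from $k^{\mc N}$ by gluing two disjoint open substacks. You instead pass to a qrsp cover $\Spf(S)\to X$ and use the explicit models $\Spf(\Prism_S)$ and $\Spf(\mathrm{Rees}(\Fil^\bullet_{\mc N}\Prism_S))/\mathbf{G}_m$. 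There everything reduces to $p$-torsion freeness of two rings. For $\Prism_S$, this is more cleanly justified by noting that $(\Prism_S,(p))$ is a crystalline prism, so $p$ is a nonzerodivisor, than by your colimit remark. For the Rees algebra, it follows because it embeds in $\Prism_S[t^{\pm1}]$. Your passage to $X^{\syn}$ via the \'etale surjection $X^{\mc N}\to X^{\syn}$ is the same mechanism the paper uses for $k^{\syn}$. Your version makes the source of flatness transparent. However, its real input is the assertion that $\Spf(S)^{\Prism}\to X^{\Prism}$ and $\Spf(S)^{\mc N}\to X^{\mc N}$ are flat covers. That is a black box of the same depth as the one the paper cites, while the paper's route works relative to the base stacks $k^{\mc N}$ and $k^{\syn}$ and then composes.

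One correction: the ``main obstacle'' you single out is illusory, and the fix you propose for it is false. Over $\mathbf{Z}_p$, $p$-complete flatness of a module $M$ just means $M[p]=0$. Indeed, the derived reduction is the two-term complex given by multiplication by $p$ on $M$, and every $\mathbf{F}_p$-module is flat. Moreover, $p$-completion preserves $p$-torsion freeness, so your subring observation already finishes the qrsp case. By contrast, $\Prism_S/\Fil^i_{\mc N}$ is never $p$-torsion free for $i\geq 1$ and $S\neq 0$. Since $\phi(p^j)=p^j$, one has $p^{i}\in\Fil^i_{\mc N}$ but $p^{i-1}\notin\Fil^i_{\mc N}$; for $S=k$, $\Fil^i_{\mc N}=p^iW(k)$ and the quotient is $W_i(k)$. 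Also, since $p\in\Fil^1_{\mc N}$ and the filtration is multiplicative, every graded piece $\Fil^i_{\mc N}/\Fil^{i+1}_{\mc N}$ is killed by $p$. Those claims should be deleted rather than relied on.
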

\begin{proof} This follows, for example, from the more general result \cite[Corollary 2.21]{pentland2025syntomificationcrystallinelocalsystems} and the fact that $k^{\mc N}$, and therefore also $k^\syn $, is flat over $\mathbf{Z}_p$.  
\end{proof}

\begin{rem}\label{R:3.3} This applies, in particular, to a smooth $k$-scheme $X$.  As discussed in \cite[3.1.1 (6)]{Bhattnotes} a flat cover of $X^\Prism $ can be obtained as follows.  Let $X = \cup _iU_i$ be an open cover such that for each $i$ there exists a lifting $\widetilde U_i/W(k)$ of $U_i$ to a smooth $p$-adic formal scheme over $W(k)$ equipped with a lifting of Frobenius.  We then get a map $\coprod U_i\rightarrow X^\Prism $ which is a flat cover.
\end{rem}

\begin{pg}
In the case when  $X/k$ is quasi-syntomic the complex $\mc M(G)$ associated to $G\in \text{FFG}(X)$ is a sheaf in the following sense.  By \ref{L:3.2} the stack $X^\syn $ is flat over $\mathbf{Z}_p$.  In particular, if $U\rightarrow X^\syn $ is a flat cover over which $\mc M(G)$ can be represented by a two-term complex $\alpha :\mc V^{-1}\rightarrow \mc V^0$, where $U$ is a $p$-adic formal scheme, then the restriction $\mc M(G)_U$ of $\mc M(G)$ to $U_\et $ is locally given by a map $\alpha _U:\mc V^{-1}_U\rightarrow \mc V^0_U$.  This map $\alpha _U$ must be a monomorphism, since $\alpha _U$ is an isomorphism after inverting $p$ and $U$ is flat over $\mathbf{Z}_p$.  It follows that $\mc M(G)_U$ is isomorphic to $M(G)_U:= \mls H^0(\mc M(G)_U)$. By descent theory we then get a quasi-coherent sheaf $M(G)$ on $X^\syn $ such that for any $g:\Sp (R)\rightarrow X^\syn $ we have $\mc M(G)_R = \mathbf{L}g^*M(G)$.    By definition, we therefore have $M(G)_U = (R^2\pi ^\syn _*\mls O_{BG^\syn })_U.$
\end{pg}

\begin{pg}
If we further assume that $G$ is killed by $p$ (in addition to $X/k$ being smooth), then we can also describe $M(G)$ as follows.  The stack $BG^\syn $ is also flat over $\mathbf{Z}_p$, since the map $X^\syn \rightarrow BG^\syn $ induced by the tautological map $X\rightarrow BG$ is a flat cover.  Let $z_X:X_{p=0}^\syn \hookrightarrow X^\syn$ and $z_{BG}:BG^\syn _{p=0}\hookrightarrow BG^\syn $ be the reductions modulo $p$ (so the stacks obtained by restricting to $\mathbf{F}_p$-algebras).  For any flat morphism $U\rightarrow BG^\syn $ we then have a short exact sequence
$$
\xymatrix{
0\ar[r]& \mls O_U\ar[r]^-p&  \mls O_U\ar[r]& \mls O_{U_{p=0}}\ar[r]& 0.}
$$
Let $\pi ^\syn :BG^\syn \rightarrow X^\syn $ be the projection. Taking cohomology we get for any flat morphism $V\rightarrow X^\syn $ an exact sequence
\begin{equation}\label{E:3.3.1}
\xymatrix{
(R^1\pi ^\syn _*\mls O_{BG^\syn })_V\ar[r]& z_{X*}(R^1\pi ^{\syn }_{p=0, *}\mls O_{BG^\syn _{p=0}})|_V\ar[r]& (R^2\pi _*^\syn \mls O_{BG^\syn })_V\ar[r]^-p& (R^2\pi _*^\syn \mls O_{BG^\syn })_V.}
\end{equation}
\end{pg}

\begin{lem}\label{L:3.6} (i) $(R^1\pi _*^\syn \mls O_{BG^\syn })_V = 0$.

(ii) $R^j\pi _*^\syn \mls O_{BG^\syn }$ is killed by $p$ for $j>0$.

(ii) $M(G)_V \simeq (R^1\pi ^\syn _{p=0, *}\mls O_{BG^\syn _{p=0}})_{V_{p=0}}.$
\end{lem}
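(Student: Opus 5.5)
The plan is to identify $R\pi^\syn_*\mls O_{BG^\syn}$ in low degrees using the Dieudonné theory recalled above, and then read off (i)–(iii) from the exact sequence \eqref{E:3.3.1}. Recall from \cite{mondal2025dieudonnetheoryclassifyingstacks} that for $G\in \FFG(X)$ we have $R\pi^\syn_*\mls O_{BG^\syn}$ with $\tau^{\leq 2}$ controlled by $\mc M(G)$. Concretely, $R^0=\mls O$, $R^1\pi^\syn_*\mls O$ corresponds to $\mls H^{-1}$ of $\mc M(G)$ (appropriately shifted), and $R^2$ to $\mls H^0$. Working flat locally on $V$ as in the preceding paragraph, $\mc M(G)_V$ is represented by $\alpha_V:\mc V^{-1}_V\to\mc V^0_V$. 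By \ref{L:3.2} and Remark \ref{R:3.3}, $V$ may be taken flat over $\mathbf{Z}_p$, so $\alpha_V$ is injective.

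For (i), I will use this injectivity. Since $\mls H^{-1}(\mc M(G)_V)=\ker\alpha_V=0$, we get $(R^1\pi^\syn_*\mls O_{BG^\syn})_V=0$. As a sanity check independent of the dictionary: $R^1\pi_*\mls O$ classifies $\mathbf{G}_a$-type homomorphisms $G\to\mathbf{G}_a^{\syn}$ (primitive elements). A torsion $G$ admits no nonzero maps into a $p$-torsion-free object, and $\mls O_V$ is $p$-torsion free.

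For (ii), I will use that $G$ is killed by $p$. Multiplication by $p$ on $G$ induces multiplication by $p$ on each $R^j\pi^\syn_*\mls O$, $j>0$; this requires the additivity of the induced action on cohomology of primitive classes. The alternative, which I would use for $j=2$, is that $\mc M(G)$ is functorial and additive in $G$. Then $p=0$ on $G$ gives $p=0$ on $M(G)=R^2\pi^\syn_*\mls O$. For general $j$ one uses that $[p]:BG\to BG$ is the zero map, since $G$ is commutative and $[p]$ factors through the trivial group. It acts on $R^j$ by $p$ for the primitive part; for decomposable classes I would argue via the Künneth/cup structure and induction on $j$, noting $R^1=0$.

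For (iii), I will plug these into \eqref{E:3.3.1}. By (i) the left term vanishes, and by (ii) the last map $p$ is zero. Hence $z_{X*}(R^1\pi^\syn_{p=0,*}\mls O)|_V\simeq (R^2\pi^\syn_*\mls O)_V=M(G)_V$. I need to check that the sequence extends correctly on the left, i.e.\ that the preceding term $(R^1)_V\xrightarrow{p}(R^1)_V$ is irrelevant; it is, since $R^1=0$.

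I expect the main obstacle to be (ii) in degrees $j\geq 3$, which is not needed for (iii). If the general statement proves awkward, I would restrict to $j\leq 2$, which is all that is used later.
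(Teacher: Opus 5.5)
Your proposal proves (i), (iii), and (ii) for $j=2$, which is all that (iii) uses. It does not prove (ii) as stated, for all $j>0$.

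For (i), the argument you call a sanity check is exactly the paper's proof. The hypercover $X^{\syn}\leftarrow G^{\syn}\leftarrow G^{\syn,2}\cdots$ gives $R^1\pi^{\syn}_*\mc F\simeq \mls Hom(G^{\syn},\mc F)$. For $\mc F=\mathbf{G}_a$ on $V$ flat over $\mathbf{Z}_p$, multiplication by $p$ on this sheaf is both injective and zero. Make that the proof. The identification of $R^1\pi^{\syn}_*\mls O_{BG^{\syn}}$ with $\mls H^{-1}(\mc M(G))$ is never set up in the paper, which only records $\mls H^0(\mc M(G)_U)=(R^2\pi^{\syn}_*\mls O_{BG^{\syn}})_U$.

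Your $j=2$ case is a legitimate route different from the paper's. Since $G\mapsto \mc M(G^*)$ is an equivalence $\FFG(X)\to\PG(X)$ of additive categories, and Cartier duality is additive, $G\mapsto\mc M(G)$ is additive. Hence $p\cdot\mathrm{id}_{\mc M(G)}=\mc M(p\cdot \mathrm{id}_G)=\mc M(0)=0$. This is correct, but it spends the full Dieudonn\'e equivalence on something the paper gets from the $\mls Hom$ description alone.

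The gap is $j\geq 3$. The claim that multiplication by $p$ on $G$ induces multiplication by $p$ on every $R^j\pi^{\syn}_*\mls O_{BG^{\syn}}$ is false beyond primitive classes. What is true is that $B[p]$ factors through $X^{\syn}$, so $[p]^*=0$ on $R^j$ for $j>0$.

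Turning that into $p\cdot R^j=0$ needs a Hopf structure on $R^*\pi^{\syn}_*\mls O_{BG^{\syn}}$. That in turn needs a K\"unneth formula for $B(G\times G)^{\syn}=BG^{\syn}\times_{X^{\syn}}BG^{\syn}$, which is not available here. Moreover the $R^j$ are $p$-torsion, hence not flat, so Tor terms intervene.

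Even granting such a structure, write $\bar\Delta^{(k)}$ for the iterated reduced coproduct and $\mu^{(k)}$ for the $k$-fold cup product. Then $[p]^*z=\sum_{k=1}^{p}\binom{p}{k}\mu^{(k)}\bar\Delta^{(k)}(z)$, and the $k=p$ term is a $p$-fold product with coefficient $1$. So $[p]^*=0$ together with induction only shows that $pz$ is decomposable, hence $p^2z=0$. The induction does not close at exponent $p$.

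The missing idea is the paper's: prove the claim for an arbitrary sheaf of abelian groups $\mc F$ on $X^{\syn}$ instead of only for $\mls O$. Then $R^1\pi^{\syn}_*\mc F\simeq\mls Hom(G^{\syn},\mc F)$ is killed by $p$ simply because $G^{\syn}$ is, with no multiplicative structure involved. One then climbs in $j$ by dimension shifting along $0\to\mc F\to\mls I\to\mc Q\to 0$ with $\mls I$ injective, via $R^j\pi^{\syn}_*\mc F\simeq R^{j-1}\pi^{\syn}_*\mc Q$ for $j>1$. This step uses that the restriction of $\mls I$ to $BG^{\syn}$ is $\pi^{\syn}_*$-acyclic, which deserves a check.
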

\begin{proof}
From the simplicial presentation
$$
\xymatrix{
\cdots G^{\syn, 2}\ar@<1ex>[r]\ar@<-1ex>[r]\ar[r]& G^\syn \ar@<.5ex>[r]\ar@<-.5ex>[r]& X^\syn \ar[r]& BG^\syn }
$$
and the spectral sequence of a hypercover, we find that for any sheaf of abelian groups $\mc F$ on $X^\syn $ 
$$
R^1\pi ^\syn _*\mc F|_{BG^\syn } = \text{Ker}(\text{pr}_1^*+\text{pr}_2^*-\text{pr}_{13}^*:\pi ^{G^\syn }_*\mc F|_{G^\syn}\rightarrow \pi ^{G^{\syn 2}}_*\mc F|_{G^{\syn 2}}),
$$
which is $\mls Hom (G^\syn , \mc F)$.   

Applying this to $\mc F = \mathbf{G}_a$, this implies (i) since $V$ is flat over $\mathbf{Z}_p$, which implies that multiplication by $p$ on $\mls Hom (G^\syn , \mathbf{G}_a)_V$ is injective, and $G^\syn $ is killed by $p$, which implies that multiplication by $p$ on $\mls Hom (G^\syn , \mathbf{G}_a)_V$ is $0$.

For statement (ii), we show that more generally for any sheaf of abelian groups $\mc F$ on $X^\syn $ the sheaf $R^j\pi _*^\syn \mc F|_{BG^\syn }$ is killed by $p$ for $j>0$.  For $j=1$ this follows from the preceding observations, since $G$ is killed by $p$.   For bigger $j$ we then proceed by induction.  Indeed if $\mc F\hookrightarrow \mls I$ is an inclusion into an injective sheaf of abelian groups with quotient $\mc Q := \mls I/\mc F$ then for $j>1$ we have 
$$
R^j\pi ^\syn _*\mc F \simeq R^{j-1}\pi ^\syn _*\mc Q.
$$

Finally (iii) follows from (i) and (ii) and consideration of the sequence \eqref{E:3.3.1}.
\end{proof}

\begin{cor} For $G\in \textrm{\rm FFG}(X)$ killed by $p$ we have $M(G)\simeq z_{X*}R^1\pi _{p=0*}^\syn \mls O_{BG^\syn _{p=0}}$.
\end{cor}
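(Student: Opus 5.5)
The corollary should follow by gluing the local statements of Lemma \ref{L:3.6} over a flat cover of $X^\syn$.

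First, I would choose a flat cover $V\rightarrow X^\syn$ by $p$-adic formal schemes. Using Lemma \ref{L:3.2} and Remark \ref{R:3.3}, $V$ can be taken flat over $\mathbf{Z}_p$, and fine enough that $\mc M(G)$ is represented on it by a two-term complex. For any such $V$, part (iii) of Lemma \ref{L:3.6} gives an isomorphism
$$
M(G)_V\simeq (R^1\pi ^\syn _{p=0,*}\mls O_{BG^\syn _{p=0}})_{V_{p=0}} = (z_{X*}R^1\pi ^\syn _{p=0*}\mls O_{BG^\syn _{p=0}})_V.
$$
Here the equality uses the fact that restricting $z_{X*}$ of a sheaf to $V$ is the same as pushing forward its restriction to $V_{p=0}$ along the closed immersion $V_{p=0}\hookrightarrow V$.

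Second, I would make sure this isomorphism is the boundary map of \eqref{E:3.3.1}. That sequence comes from the short exact sequence $0\rightarrow \mls O\xrightarrow{p}\mls O\rightarrow \mls O_{p=0}\rightarrow 0$ on $BG^\syn$. We also have $M(G)=R^2\pi ^\syn _*\mls O_{BG^\syn}$, which is already a sheaf on $X^\syn$. So the isomorphism should be the restriction to $V$ of a global morphism of sheaves on $X^\syn$, namely the boundary
$$
\delta :z_{X*}R^1\pi ^\syn _{p=0*}\mls O_{BG^\syn _{p=0}}\rightarrow R^2\pi ^\syn _*\mls O_{BG^\syn }.
$$
Because $\delta$ is defined globally, it is automatically compatible with pullback between different $V$'s, and no separate descent datum has to be checked.

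Third, I would show $\delta$ is an isomorphism. Being an isomorphism is a local property, so it suffices to check it after restricting to the cover. On $V$, injectivity of $\delta$ follows from part (i) of the lemma, since the preceding term $R^1\pi ^\syn _*\mls O_{BG^\syn}$ vanishes. Surjectivity follows from part (ii): multiplication by $p$ on $R^2\pi ^\syn _*\mls O_{BG^\syn}$ is zero, so the kernel of $p$ is everything and $\delta$ hits all of it.

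The main point needing care is the second step: that forming \eqref{E:3.3.1} commutes with restriction to a flat $V$, so that the local isomorphisms really are restrictions of the global $\delta$. This holds because flat pullback commutes with higher direct images of quasi-coherent sheaves (flat base change). Flatness of $BG^\syn$ over $\mathbf{Z}_p$ is needed only to get the short exact sequence, and it was already established before the lemma.
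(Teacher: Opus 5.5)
Your proposal is correct and follows essentially the paper's argument. The paper records this corollary as an immediate consequence of Lemma~\ref{L:3.6}(iii), whose proof is exactly your injectivity/surjectivity analysis of the boundary map in \eqref{E:3.3.1}; your remark that this boundary map is globally defined, and hence compatible with flat pullback, supplies the gluing step the paper leaves implicit.
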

\qed

\section{The crystalline result}\label{S:section4}

In this section, we compare the isomorphism in \ref{T:theorem1} with the one constructed in \cite{MR667344}.

\begin{pg}
Let $X/k$ be a $k$-scheme which is $p$-completely a local complete intersection.

    Recall that the stack $X^\crys $ is defined by transmutation from the ring stack $\mathbf{G}_a/\mathbf{G}_a^\sharp \simeq \F _*W/p$ over $\Spf (W)$.  For any object $U=\Sp (A_0)\hookrightarrow T = \Sp (A)$ of the crystalline site of $X/W$ there is an induced map $g_T:T\rightarrow X^\crys $ obtained as follows. First, we note that the divided power structure on the ideal $\text{Ker}(A\rightarrow A_0)$ defines a factorization of the natural inclusion 
    morphism $\mathrm{Ker}(A \to A_0) \to A$ as $\mathrm{Ker}(A \to A_0) \to \mathbf{G}_a^\sharp (A) \to A$. Using the octahedral axiom, we obtain a map
    $$A_0\rightarrow \mathbf{G}_a/\mathbf{G}_a^\sharp (A).$$ Therefore, we obtain a morphism $\Sp (\mathbf{G}_a/\mathbf{G}_a^\sharp (A))\rightarrow X$, which, by definition of transmutation, defines a natural map 
    \begin{equation}\label{sitevsstack}
        g_T:T\rightarrow X^\crys .
    \end{equation}{}
  In particular, there is a natural map
    $$
    R\Gamma (X^\crys , \mc O)\rightarrow R\Gamma ((X/W)_\crys , \mc O_{X/W}),
    $$
    which is an isomorphism by \cite[6.4]{BL1}.
\end{pg}

\begin{pg}\label{P:4.2}
  For $G\in \FFG (X)$   let $\underline G$ be the sheaf of groups on the big crystalline site $\text{CRIS}(X/W)$ obtained by associating to an object
    \begin{equation}\label{E:crysobject}
    \xymatrix{
    U\ar[d]\ar@{^{(}->}[r]& T\\
    X&}
    \end{equation}
    the group $G(U)$.  The crystalline site of $BG/W$ consists of diagram \eqref{E:crysobject} together with a $G_U$-torsor $P_U\rightarrow U$.  Therefore, there is a tautological class $\xi \in H^1((BG/W)_\crys , \underline G)$; equivalently, we think of $\xi $ as a morphism in the derived category $\mathbf{Z}_{(BG/W)_\crys }\rightarrow \pi ^*\underline G[1]$, where $\pi :(BG/W)_\crys \rightarrow (X/W)_\crys $ is the projection.  We therefore get for any complex $\mc F$ on $CRIS(X/W)$ a map of complexes on $(X/W)_\crys $ 
    $$
    \xymatrix{
    \mls RHom _X(\underline G, \mc F)\ar[r]& R\pi _*\mls RHom _{BG}(\pi ^*\underline G, \pi ^*\mc F)\ar[r]^-{\xi }& R\pi _*\mls RHom _{BG}(\mathbf{Z}_{BG}[-1], \pi ^*\mc F)\ar[r]^-{\simeq }& R\pi _*\pi ^*\mc F[1].}
    $$
    In particular, we obtain a map
    \begin{equation}\label{E:4.2.1}
    \mls Ext ^j_{(X/W)_\crys }(\underline G, \mls O_{X/W})\rightarrow R^{j+1}\pi _*\mls O_{BG/W}.
    \end{equation}
\end{pg}

\begin{prop}\label{P:0.1} Let $U\rightarrow X$ be an \'etale morphism,  let $U\hookrightarrow Y$ be an immersion into a smooth $W$-scheme $Y$, and let $U\hookrightarrow D$ be the associated $p$-adically completed divided power envelope.  Then the map
$$
\xi _D:\mls Ext ^1(\underline G, \mls O_{X/W})_D\rightarrow (R^2\pi _*\mls O_{BG/W})_D,
$$
obtained by evaluating \eqref{E:4.2.1} for $j=1$ on $D$,
is an isomorphism.
\end{prop}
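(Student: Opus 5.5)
Both sides of $\xi_D$ compute the crystalline Dieudonn\'e module, by two different routes. The left side $\mls Ext^1(\underline G,\mls O_{X/W})_D$ is the Berthelot--Breen--Messing definition. The right side $(R^2\pi_*\mls O_{BG/W})_D$ is crystalline cohomology of the classifying stack. The plan is to compute both sides explicitly on $D$ and check that $\xi_D$ matches the computations, following the pattern of Lemma \ref{L:3.6}.

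\textbf{The target.} First I would compute $(R^j\pi_*\mls O_{BG/W})_D$ for $j\le 2$ using the \v{C}ech nerve $G^{\bullet}\rightrightarrows X\to BG$ and the spectral sequence of this hypercover on the crystalline site:
$$E_1^{a,b}=(R^b\pi^{G^a}_*\mls O_{G^a/W})_D\Rightarrow (R^{a+b}\pi_*\mls O_{BG/W})_D.$$
The $E_1^{a,0}$ row is the complex $C^\bullet$ obtained by evaluating $\mls O_{G^a/W}$ pushed forward to $D$. Its cohomology in degree $1$ is $\mls Hom(\underline G,\mls O_{X/W})_D$, exactly as in Lemma \ref{L:3.6}. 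This $\mls Hom$ vanishes because $D$ is $p$-torsion free, while $G$ is $p$-power torsion, so it is killed by a power of $p$.

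The row $b=1$ is $H^1_\crys(G^a/D)$. For finite flat $G$ this is the relative crystalline $H^1$ of a finite flat group over a PD base. The primitive part of this row, i.e.\ the $E_2^{0,1}$ term, should be identified with $\mls Ext^1$ via the standard argument of \cite{MR667344}. The cleanest way to organize this is to use the Breen / Mac Lane resolution of $\underline G$ by $\mathbf Z[\underline G^{n}]$-terms, truncated in low degrees. Its low-degree part is $\mathbf Z[G^2]\oplus\mathbf Z[G^3]\to\mathbf Z[G^2]\to\mathbf Z[G]\to \underline G\to 0$, and applying $R\mathcal Hom(-,\mls O_{X/W})$ to it gives a spectral sequence computing $\mls Ext^j(\underline G,\mls O_{X/W})$ for $j\le 1$. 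Its $E_1$ terms are $(R^b\pi^{G^a}_*\mls O)_D$, the same terms that appear in the hypercover spectral sequence.

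\textbf{Comparing the two spectral sequences.} The map $\xi$, defined by cup product with the tautological class, is induced by a morphism from the Breen resolution (shifted) to the simplicial resolution $\mathbf Z[G^\bullet]$ computing $\mathbf Z_{BG}$. This morphism is an isomorphism in the lowest terms: $\mathbf Z[G]$ matches $\mathbf Z[G]$, and $\mathbf Z[G^2]$ matches $\mathbf Z[G^2]$ in the relevant spot. So in total degree $2$ on the $BG$ side against degree $1$ on the Ext side, the induced map on $E_2$ terms is:
\begin{itemize}
\item the identity on the $(0,1)$-type terms, namely primitive classes in $H^1(G/D)$;
\item an isomorphism on $\mls Hom$-type terms, both of which vanish;
\item on the remaining $(2,0)$ and $(1,1)$ terms, something that must be controlled.
\end{itemize}
Concretely, I expect $(R^2\pi_*)_D$ to have a filtration with graded pieces $E_\infty^{2,0}$, $E_\infty^{1,1}$ and $E_\infty^{0,2}$. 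Since $C^\bullet$ is the cobar complex of $\mls O(G)$ evaluated on $D$, its $H^2$ is $\mls Ext^2_{\text{Hopf}}$ and corresponds to $\mls Ext^1$ of the Cartier dual. The target therefore looks like an extension of $\mls Ext^1(\underline G,\mls O)$-type data.

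\textbf{Main obstacle.} The hard step is controlling these higher terms, i.e.\ showing that $H^1_\crys(G/D)$ contributes only through primitive classes and that $H^2(C^\bullet)$ is accounted for correctly. I would first reduce to the case where $G$ embeds in an abelian scheme, or more simply to $G$ killed by $p$, by d\'evissage: both sides are $\delta$-functors in $G$ with vanishing $\mls Hom$, so the five lemma applies. I would then use \cite[Th\'eor\`eme 3.1.6]{MR667344}-type results on $\mls Ext^j(\underline G,\mls O_{X/W})$ for $j=0,1$, and compare directly with the \v{C}ech computation.

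An alternative for this step is to identify both sides with $M_\crys(G)_D$ through the stack $BG^\crys$: the right side via \cite[6.4]{BL1} applied to $BG$, and the left side via \cite[2.2.6]{mondal2025dieudonnetheoryclassifyingstacks}. One then checks that $\xi$ is compatible with these identifications by functoriality in $G$ and reduction to $\mu_p$ and $\mathbf Z/p$, where everything is explicit.
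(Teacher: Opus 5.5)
\textbf{There is a genuine gap.} Your proposal stops exactly where the content of the proposition lies. You correctly see that $(R^2\pi_*\mls O_{BG/W})_D$ is assembled from the $E^{2,0}$ and $E^{1,1}$ terms of the \v{C}ech spectral sequence. But you never show that these terms match $\mls Ext^1(\underline G,\mls O_{X/W})_D$ under $\xi$, and the devices you offer for this step do not work.

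\emph{The d\'evissage.} It presupposes that $G\mapsto(R^2\pi_*\mls O_{BG/W})_D$ is a $\delta$-functor compatible with $\xi$. An exact sequence $0\to G'\to G\to G''\to 0$ only gives a fibration $BG'\to BG\to BG''$ and a Leray spectral sequence. Turning that into a long exact sequence to which the five lemma applies needs low-degree input (for instance injectivity of $\xi$ in the next degree) of the same kind you are trying to prove.

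\emph{Reduction to $\mu_p$ and $\mathbf Z/p$.} This is not an argument either. A general finite flat $G$ is not an iterated extension of these: already $\alpha_p$ over $k$ is not, let alone group schemes over a non-perfect base. And a natural map being an isomorphism on special objects does not propagate to all $G$.

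\emph{The route through $BG^{\crys}$.} Using \cite[6.4]{BL1} and \cite[2.2.6]{mondal2025dieudonnetheoryclassifyingstacks} leaves unverified precisely the compatibility of those identifications with $\xi$, which is the statement itself.

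\textbf{The missing idea.} This is the heart of the paper's proof. A class in $H^2$ of $BG$ with coefficients in the abelian sheaf $\mls O$ corresponds, locally on $D$, to a \emph{central} extension of $\underline G$ by $\mls O_{X/D}$. By contrast, $\mls Ext^1$ only sees \emph{commutative} extensions. The discrepancy is the commutator pairing $\underline G\times\underline G\to\mls O_{X/D}$; in your language, it is the antisymmetrization of a $2$-cocycle contributing to $E_2^{2,0}$.

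The paper phrases this with gerbes. The class $\alpha$ gives an $\mls O$-gerbe $\mc X_\alpha$ on $\text{CRIS}(BG/D)$, hence a gerbe $\widetilde{\mc X}_\alpha$ on $\text{CRIS}(X/D)$ over $BG$ whose band is such a central extension. The pairing vanishes for three reasons:
\begin{enumerate}
\item it is determined by its value on the universal point $\beta$ over the PD envelope $E$ of $G\times G$ in a smooth $Y$-scheme;
\item $\beta$ is torsion;
\item $\mls O_{X/D}(E)$ is $p$-torsion free by \cite[4.7]{dJM}, since $G\times G$ is lci.
\end{enumerate}
Surjectivity then follows because $\mc X_\alpha$ is locally $B\mc E$. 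Injectivity follows by recovering the extension as the inertia. No spectral-sequence bookkeeping is needed.

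\textbf{Further inaccuracies.} The same torsion-freeness is what your vanishing of $\mls Hom(\underline G,\mls O_{X/W})_D$ actually needs. Torsion-freeness of $D$ is not enough, because the site over $D$ contains objects like $U'\hookrightarrow U'$ on which $\mls O$ is killed by $p$.

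Also, the row $E_1^{a,0}$ is $H^0_\crys(G^a_U/D)$, not $\mls O(G^a)$. For $G=\mu_p$ over $k$ and $D=\Spf W(k)$ these groups are all $W(k)$. So $C^\bullet$ is acyclic in positive degrees and has nothing to do with $\mls Ext$ of the Cartier dual. In that case all of $R^2$ comes from the primitive classes in $H^1_\crys(G/D)$. These sit in $E_2^{1,1}$, not $E_2^{0,1}$, since $E_1^{0,b}=0$ for $b>0$.
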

\begin{proof}
The assertion is \'etale local on $U$ so it suffices to consider the case when $U$ and $Y$, and hence also $D$, are both affine, and we may further assume that $U = X$.

Let us first prove the surjectivity in this case.  Since $(R^2\pi _*\mls O_{BG/W})_D$ is quasi-coherent on $D$ it suffices to show that a class $\alpha \in H^2((BG_U/D)_{\text{\rm CRIS}}, \mls O_{BG_U/W})$ is locally on $D$ induced by an extension
$$
0\rightarrow \mls O_{BG_U/D}\rightarrow \mc E\rightarrow \underline G\rightarrow 0.
$$

The class $\alpha $ corresponds to a $\mls O_{BG/D}$-gerbe $\mc X_\alpha $ on $\text{\rm CRIS}(BG/D) = \text{\rm CRIS}(X/D)|_{BG}$.  By the general formalism of over-categories, such a gerbe corresponds to a gerbe $\widetilde {\mc X}_\alpha $ on $\text{\rm CRIS}(X/D)$ with a morphism to $BG$.  Concretely, let $\widetilde {\mc X}_\alpha $ denote the stack over $\text{CRIS}(X/D)$ which to any object $U\hookrightarrow T$ associates pairs $(P, x)$, where $P$ is a $\underline G$-torsor over $U$ and $x\in \mc X_\alpha (U\hookrightarrow T)$ is an object (here $U\hookrightarrow T$ is viewed as an object of $\text{CRIS}(BG/D)$ via $P$).  A morphism $(P, x)\simeq (P', x')$ in $\widetilde {\mc X}_\alpha (U\hookrightarrow T)$ is a pair $(u, u^b)$, where $u:P\rightarrow P'$ is an isomorphism of $G$-torsors on $U$ and $u^b:x\rightarrow x'$ is an isomorphism in $\mc X_\alpha (U\hookrightarrow T)$.  It is immediate from the definition that $\widetilde {\mc X}_\alpha $ is a gerbe with map to $BG$ sending $(P, x)$ to $P$.

\begin{lem} The band $\mc E$ of $\widetilde {\mc X}_\alpha $ is abelian and the projection to $BG$ defines a short exact sequence of sheaves of abelian groups on $\text{\rm CRIS}(X/D)$ 
\begin{equation}\label{E:sequence}
0\rightarrow \mls O_{X/D}\rightarrow \mc E\rightarrow \underline G\rightarrow 0.
\end{equation}
\end{lem}
\begin{proof}
It suffices to prove the first statement, for then the short exact sequence \eqref{E:sequence} is obtained by taking inertia stacks.

To prove that the band $\mc E$ is abelian proceed as follows.
Note first that for  $(U\hookrightarrow T)\in \text{CRIS}(X/D)$ and $(P, x)\in \widetilde {\mc X}_\alpha (U\hookrightarrow T)$ we obtain a central extension  of sheaves of groups
$$
1\rightarrow \mc O_{X/D}\rightarrow \underline {\text{Aut}}_{\widetilde {\mc X}_\alpha }(P, x)\rightarrow \underline G\rightarrow 1
$$
on $\text{CRIS}(X/D)|_{(U\hookrightarrow T)}$.  The possible failure of commutativity of the band is therefore measured by a homomorphism
$$
\rho :\underline G\times \underline G\rightarrow \mls O_{X/D}.
$$
This map does not depend on the particular local choices of objects $(P, x)$ and therefore is defined globally in terms of the gerbe $\widetilde {\mc X}_\alpha $.

We claim that the map $\rho $ must be the zero map.  For this choose an immersion $G\times G\hookrightarrow M$ over $Y$ with $M\rightarrow Y$ smooth, and let $G\times G\hookrightarrow E$ be the divided power envelope of $G\times G$ in $M$.  Let $\beta \in (\underline G\times \underline G)(G\times G\hookrightarrow M)$ be the universal object.  Then for any object $U\hookrightarrow T$, with $U$ and $T$ affine, and section $\gamma \in (\underline G\times \underline G)(U\hookrightarrow T)$ there exists a commutative diagram
$$
\xymatrix{
U\ar@{^{(}->}[r]\ar[d]_-{f_0}& T\ar[d]_-f\\
G\times G\ar@{^{(}->}[r]& E}
$$
such that $\gamma = f_0^*(\beta )$.  Therefore the map $\rho $ is determined by $\rho (\beta )\in \mls O_{X/D}(E)$.  Since $\beta $ has finite order and $\mls O_{X/D}(E)$ is torsion free, by \cite[4.7]{dJM} and the fact that $G\times G$ is  lci over the lci scheme $X$, it follows that $\rho (\beta ) = 0$ and $\rho = 0$.
\end{proof}

From the band of $\widetilde {\mc X}_\alpha $ we there obtain an extension \eqref{E:sequence} whose associated gerbe $B\mc E$ defines the class $\xi _D([\mc E])$ of the extension.  To complete the proof of surjectivity of $\xi _D$ it then suffices to note that $\mc X_\alpha $ is locally on $D$ isomorphic to $B\mc E$.  

This argument also proves the injectivity: We can recover the class of an extension of sheaves by passing to the inertia stacks of the gerbe defined by the image of the extension under $\xi _D$.

This completes the proof of \ref{P:0.1}. \end{proof}

\begin{pg}
There is a morphism of ringed topoi
$$
\Phi :(X/k)_{\text{CRIS}} \rightarrow X_\ET
$$
defined as follows, where the target is the big \'etale topos of $X$.    On the level of underlying topoi this is simply the usual projection to the \'etale topos.  So for a sheaf of sets $\mc F$ on $X_\ET $ we have $\Phi ^{-1}\mc F(U\hookrightarrow T) = \mc F(U)$, and for $\mc G$ on $(X/W)_{\text{CRIS}}$ the value of $\Phi _*\mc G$ on $U$ is the global sections $\Gamma ((U/k)_{\text{CRIS}}, \mc G)$.

The map on sheaves of rings incorporates a Frobenius twist.  For any object $U\hookrightarrow T$ the Frobenius morphism $\mc O_T\rightarrow \mc O_T$ factors through a map $\mc O_U\rightarrow \mc O_T$, since the kernel of $\mc O_T\rightarrow \mc O_U$ has divided powers, which defines the map of rings $\Phi ^{-1}\mc O_X\rightarrow \mc O_{X/k}$.
\end{pg}

\begin{lem} For a sheaf of $\mls O_X$-modules $\mls G$ on $X_\ET$ the sheaf $\Phi ^*\mls G$ on $(X/k)_{\text{\rm CRIS}}$ is canonically isomorphic to the sheaf
$$
(U\hookrightarrow T)\mapsto \Gamma (T, g_T^*\pi _t^{\crys *}\mc G),
$$
where $\pi _t^\crys :X_{p=0}^\crys \rightarrow X$ is the restriction of the previously defined map $\pi _t$ (which is the map induced by transmutation from the map $F_*W/p\rightarrow F_*\mathbf{G}_a$) and $g_T$ is as defined in \ref{sitevsstack}.
\end{lem}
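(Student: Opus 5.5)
The plan is to reduce the statement to an identification of two ring maps $\mc O_U\rightarrow \mc O_T$ attached to an object $U\hookrightarrow T$ of $\text{CRIS}(X/k)$. Since both sides are compatible with the transition maps of the crystalline site, this suffices. Both sheaves are obtained from $\mls G|_U$ by base change along such a map. On one side, $\Phi ^*\mls G(U\hookrightarrow T) = \Gamma (T, \mc O_T\otimes _{\mc O_U}\mls G|_U)$, where $\mc O_U\rightarrow \mc O_T$ is the factorization of Frobenius through the divided power ideal. On the other side, $g_T^*\pi _t^{\crys *}\mls G = (\pi _t^\crys \circ g_T)^*\mls G$. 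So the key claim is that the composite $T\xrightarrow{g_T} X^\crys _{p=0}\xrightarrow{\pi _t^\crys } X$ equals $T\xrightarrow{\Phi _T}U\rightarrow X$, where $\Phi _T$ is the map factoring $\F _T$. Granting the claim, the isomorphism is the canonical one for pullback along equal maps. Its compatibility with morphisms $(U'\hookrightarrow T')\rightarrow (U\hookrightarrow T)$ then follows from the naturality of $g_T$ and of $\Phi _T$ in $T$.

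To prove the claim, I would unwind transmutation. Write $T=\Sp (A)$ and $U = \Sp (A_0)$, with $I = \text{Ker}(A\rightarrow A_0)$ equipped with divided powers, and $pA=0$. By definition, $g_T$ corresponds to the map $A_0\rightarrow (\mathbf{G}_a/\mathbf{G}_a^\sharp )(A)$ produced by the octahedral axiom from $I\rightarrow \mathbf{G}_a^\sharp (A)\rightarrow A$. The map $\pi _t^\crys $ is induced by a map of ring stacks $\F _*W/p\simeq \mathbf{G}_a/\mathbf{G}_a^\sharp \rightarrow \F _*\mathbf{G}_a$. Concretely, since $p=0$, this map is the one induced by the Frobenius $x\mapsto x^p$ on $\mathbf{G}_a$, which kills $\mathbf{G}_a^\sharp $ because $x^p = p!\,x^{[p]}=0$. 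Therefore the composite $A_0\rightarrow (\mathbf{G}_a/\mathbf{G}_a^\sharp )(A)\rightarrow A$, viewed through the Frobenius-twisted structure, sends $\bar a\mapsto a^p$. This is exactly the ring map $\mc O_U\rightarrow \mc O_T$ defining $\Phi _T$. One also uses that $\phi ^*X\simeq X$, since $X$ is defined over $k$, to match the Frobenius twist in the target of $\pi _t$ with the semilinearity built into $\Phi $.

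The main obstacle is making this identification precise at the level of ring stacks rather than merely on $\pi _0$. The map $(\mathbf{G}_a/\mathbf{G}_a^\sharp )(A)\rightarrow \F _*\mathbf{G}_a(A)$ is a map out of an animated ring (a two-term complex $\mathbf{G}_a^\sharp (A)\rightarrow A$), and $g_T$ gives a map of animated rings $A_0\rightarrow$ this complex. I would handle this as follows. The target $\F _*\mathbf{G}_a(A)=A$ is discrete, so a map from the cone of $\mathbf{G}_a^\sharp (A)\rightarrow A$ to $A$ is determined by a ring map $A\rightarrow A$ killing the image of $\mathbf{G}_a^\sharp (A)$, with no higher coherence data. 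The composite with $A_0\rightarrow \text{cone}$ is then determined by its effect on $\pi _0 = A/I\simeq A_0$. This reduces the claim to the elementwise computation above.

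I would also check that the identification $\F _*W/p\simeq \mathbf{G}_a/\mathbf{G}_a^\sharp $ used in defining $X^\crys $ is compatible with the map "$\F :W\rightarrow \F _*W$ followed by $R$" used to define $\pi _t$. This amounts to the identity $R(\F (x)) = R(x)^p$ on Witt vectors in characteristic $p$.
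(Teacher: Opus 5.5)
Your proposal is correct and is exactly the unwinding that the paper compresses into ``This is immediate from the definitions.'' Everything reduces to the equality $\pi_t^{\crys}\circ g_T=(U\rightarrow X)\circ\Phi_T$ (using $\phi^*X\simeq X$), that is, to $R(F(x))=R(x)^p$ for Witt vectors in characteristic $p$. As in the paper, identifying both sides with base changes of $\mls G|_U$ tacitly uses that $\mls G$ is quasi-coherent, which holds for $\mls G=\mls Lie_{G^*}$, the case where the lemma is applied.
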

\begin{proof}
This is immediate from the definitions.
\end{proof}

\begin{pg} Let $M_\Prism (G)$ denote the pullback of $M(G)\in \mls D_{\text{qcoh}}(X^\syn )$ to $X^\Prism $, so
 $M_\Prism (G):= R^2q ^\Prism _*\mathbf{G}_a$, where $q ^\Prism :BG^\Prism \rightarrow X^\Prism $ is the natural map of prismatizations.    Under the isomorphism $\phi ^*X^\Prism \simeq X^\crys $ we obtain a quasi-coherent sheaf $M_\crys (G)$ on $X^\crys $, and it follows from the preceding discussion and Proposition \ref{P:0.1} that the crystal $M_\crys ^\prime (G):=\mls Ext ^1_{(X/W)_\crys }(\underline G, \mls O_{X/W})$, which is the crystalline Dieudonn\'e module as defined in \cite{MR667344}, is given by associating to a thickening \eqref{E:crysobject} the complex $g_T^*M_\crys (G)$.  
 
We will also consider the restriction $M_\crys ^\prime (G)_{p=0}$ of $M_\crys ^\prime (G)$ to $\text{CRIS}(X/k)$.  This sheaf can also be described as $\mls Ext^1_{(X/k)_\crys }(\underline G, \mls O_{X/k})$.
\end{pg}

\begin{pg} The universal class $\xi \in H^1(BG, G)$ also defines a map 
$$
\mls Hom (G, \mathbf{G}_a)\rightarrow R^1\pi _*\mls O_{BG}, \ \ \alpha \mapsto \alpha _*\xi .
$$
By \cite[Equation 14.4]{MR374150} we have a canonical isomorphism $\mls Lie _{G^*}\simeq \mls Hom (G, \mathbf{G}_a).$  
\end{pg}

\begin{lem} The induced morphism $\mls Lie _{G^*}\rightarrow R^1\pi _*\mls O_{BG}$ is an isomorphism.
\end{lem}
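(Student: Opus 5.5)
We have to show that the map $\mls Hom (G, \mathbf{G}_a)\rightarrow R^1\pi _*\mls O_{BG}$, $\alpha \mapsto \alpha _*\xi $, is an isomorphism, where $\pi :BG\rightarrow X$ is the projection. The plan is to compute $R^1\pi _*\mls O_{BG}$ with the Čech/hypercover description of $BG$ used in the proof of \ref{L:3.6}, now on the small fppf (or big étale) site of $X$ rather than on $X^\syn $.

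First, the question is local on $X$, so we may assume $X=\Sp (A)$ is affine. Consider the simplicial presentation $\cdots G^2\rightrightarrows G\rightrightarrows X\rightarrow BG$ coming from the flat cover $X\rightarrow BG$. The associated descent spectral sequence has $E_1^{a,b}=H^b(G^a,\mls O)$. Each $G^a$ is affine, being finite flat over the affine $X$, so $H^b(G^a,\mls O)=0$ for $b>0$. The spectral sequence therefore degenerates, and $R\Gamma (BG,\mls O)$ is computed by the cobar complex
$$
A\rightarrow \mls O(G)\rightarrow \mls O(G^2)\rightarrow \cdots ,
$$
with differentials given by alternating sums of pullbacks along face maps. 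This is exactly the Hochschild-type complex computing the cohomology of $G$ with coefficients in the trivial module $\mathbf{G}_a$.

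Next we read off $H^1$ of this complex. The first differential $A\rightarrow \mls O(G)$ is $a\mapsto a-a=0$. In degree $1$, a function $f\in \mls O(G)$ is a cocycle precisely when $f(gh)=f(g)+f(h)$, i.e. when $\text{pr}_1^*f+\text{pr}_2^*f-m^*f=0$, where $m$ is the group law. Hence
$$
H^1=\text{Ker}\bigl(\mls O(G)\rightarrow \mls O(G^2)\bigr)=\text{Hom}(G,\mathbf{G}_a),
$$
which matches the formula already used in \ref{L:3.6}. The same computation after any flat base change on $X$ shows that $R^1\pi _*\mls O_{BG}\simeq \mls Hom (G,\mathbf{G}_a)$ as sheaves.

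The remaining, and main, point is to check that this identification agrees with the map $\alpha \mapsto \alpha _*\xi $. We will do this by tracing the tautological torsor. The torsor $X\rightarrow BG$ is trivialized on the cover $X$, so its Čech cocycle in $G(G)$ is the identity section $\text{id}_G$. Pushing forward along $\alpha $ gives the $\mathbf{G}_a$-torsor whose Čech $1$-cocycle is $\alpha \circ \text{id}_G=\alpha \in \mls O(G)$. This is exactly the cocycle representing $\alpha $ under the computation above. We also need the Čech-to-derived comparison to be compatible with pushforward of torsor classes. This is standard for $H^1$ (the torsor–cocycle dictionary), and we will cite it rather than redo it. Once this compatibility is in place, the map is the identity under the identification, and hence an isomorphism.

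Finally, composing with $\mls Lie _{G^*}\simeq \mls Hom (G,\mathbf{G}_a)$ from \cite[Equation 14.4]{MR374150} gives the stated isomorphism $\mls Lie _{G^*}\rightarrow R^1\pi _*\mls O_{BG}$.
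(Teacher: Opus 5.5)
Your proof is correct, but it takes a different route from the paper's.

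\textbf{Your route.} You compute $R\pi _*\mls O_{BG}$ outright from the descent spectral sequence for the cover $X\rightarrow BG$, the same device the paper uses in Lemma \ref{L:3.6}. Vanishing of higher quasi-coherent cohomology on the affine schemes $G^a$ reduces this to the cobar complex. You read off $H^1$ as the additive functions, and then check on \v{C}ech cocycles that $\alpha _*\xi $ is represented by $\alpha $; a sign convention may intervene here, but it is irrelevant for being an isomorphism.

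\textbf{The paper's route.} The paper instead writes down an inverse directly. It identifies $R^1\pi _*\mls O_{BG}$ with isomorphism classes of $\mls O$-torsors on $BG_T$ rigidified along the identity section $s_T$. A pair $(P,\sigma )$ is sent to the homomorphism $G_T=\mathrm{Aut}(s_T)\rightarrow \mls O_T$ given by the action on $s_T^*P\simeq \mls O_T$.

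\textbf{Comparison.} Your approach yields more information: the whole complex $R\pi _*\mls O_{BG}$ as Hochschild cohomology, and an identification visibly consistent with the formula used in \ref{L:3.6}. The cost is invoking cohomological descent and the compatibility of \v{C}ech-to-derived $H^1$ with the torsor dictionary. The paper's argument is more intrinsic and avoids spectral sequences. It relies instead on the rigidified-torsor description of $R^1\pi _*$, which uses $\pi _*\mls O_{BG}=\mls O_X$, and it leaves to the reader the check that the two maps are mutually inverse.

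\textbf{A minor point.} $R^1\pi _*$ here is a sheaf on the big site, with $T\rightarrow X$ arbitrary, so you want your computation for every affine $T\rightarrow X$, not only flat ones. It goes through verbatim, since $G_T$ remains finite flat over $T$.
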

\begin{proof}
This is standard.  An inverse to the map can be defined as follows.  Let $s:X\rightarrow BG$ be the section corresponding to the trivial torsor.  Then $R^1\pi _*\mls O_{BG}$ can be viewed as the sheaf which to any $T\rightarrow X$ associates the set of isomorphism classes of pairs $(P, \sigma )$, where $P\rightarrow BG_T$ is a $\mls O_{BG_T}$-torsor and $\sigma :s_T^*P\simeq \mls O_T$ is a trivialization (the point of adding the ``rigidification'' at the identity section is so that these objects have no nontrivial automorphisms). Given a pair $(P, \sigma )$ over $T\rightarrow X$ the natural action of $G_T = \text{Aut}(s_T)$ on $s_T^*P$ induces via $\sigma $ an action of $G_T$ on $\mls O_T$ which defines a homomorphism $G_T\rightarrow \mls O_T$.  This defines an inverse to the morphism defined above.
\end{proof}

\begin{pg}
    The map $\pi _t^\crys :X^\crys _{p=0} \rightarrow X$  can also be defined on the classifying stacks, so we get a commutative diagram
    $$
    \xymatrix{
    BG^\crys _{p=0}  \ar[d]^-{\pi ^\dR  }\ar[r]^-{\pi _t^{\crys BG}}& BG\ar[d]^-\pi \\
    X^\crys _{p=0} \ar[r]^-{\pi _t^\crys }& X.}
    $$
    This defines a pullback map (see Lemma \ref{L:3.6}, (ii))
    \begin{equation}\label{E:4.6.1}
    \pi _t^{\crys *}\mls Lie _{G^*}\simeq \pi _t^*R^1\pi _*\mls O_{BG}\rightarrow R^1\pi ^\crys  _{p=0*}\mls O_{BG^\crys _{p=0}}\simeq M_\crys (G)_{p=0}
    \end{equation}
    over $X^\crys _{p=0}$.
    Pulling back further to the crystalline site $\text{CRIS}(X/k)$ and applying the pushforward along $z_\crys :(X/k)_{\text{CRIS}}\rightarrow (X/W(k))_{\text{CRIS}}$ we get a map
    \begin{equation}\label{E:crysmap}
    z_{\crys *}\Phi ^*\mls Lie _{G^*}\rightarrow M^{\prime }_\crys (G).
    \end{equation}
    By construction this map is induced by applying $z_{\crys *}$ to a map $\rho :\Phi ^*\mls Lie _{G^*}\rightarrow M^\prime _\crys (G)_{p=0}.$

Such a map $z_{\crys *}\Phi ^*\mls Lie_{G^*}\rightarrow M^{\prime }_\crys (G)$ is also defined \cite[4.3.6]{MR667344} using a map $\rho ^\prime :\Phi ^*\mls Lie _{G^*}\rightarrow M^\prime _\crys (G)_{p=0}$.
\end{pg}

\begin{prop}\label{P:4.10} We have $\rho = \rho ^\prime $.  In particular, the map \eqref{E:crysmap} agrees with the one defined in \cite[4.3.6]{MR667344}.
\end{prop}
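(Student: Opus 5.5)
To prove $\rho = \rho'$ I will work entirely on the crystalline site $\text{CRIS}(X/k)$. Both maps are morphisms of sheaves of $\mls O_{X/k}$-modules $\Phi^*\mls Lie_{G^*}\rightarrow M'_\crys(G)_{p=0}$. Since $\Phi^*$ is left adjoint to $\Phi_*$ and is linear for the Frobenius-twisted structure map $\Phi^{-1}\mls O_X\rightarrow \mls O_{X/k}$, it suffices to check that the adjoint maps
$$
\mls Lie_{G^*}\simeq \mls Hom(G,\mathbf{G}_a)\rightarrow \Phi_*\mls Ext^1_{(X/k)_\crys}(\underline G,\mls O_{X/k})
$$
agree. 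Concretely, I must show the two constructions assign the same element of $\mls Ext^1$ to a local section $f:G_U\rightarrow \mathbf{G}_{a,U}$ over an étale $U\rightarrow X$.

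First I will recall the Berthelot–Breen–Messing description from \cite[4.3.6]{MR667344}. A homomorphism $f:G\rightarrow \mathbf{G}_a$ gives the composite $\underline G\rightarrow \Phi^{-1}\mathbf{G}_a\rightarrow \mls O_{X/k}$. This composite uses the map $\mls O_U\rightarrow \mls O_T$, $a\mapsto \tilde a^p$, that defines $\Phi$. One then applies the connecting map for the extension built from $\mls O_{X/k}$: via the boundary of $0\rightarrow \mls O_{X/k}\rightarrow \mls O_{X/k}\rightarrow\cdots$, or equivalently via the pushout of the canonical extension of $\underline G$ by the $p$-torsion description used in \emph{loc.\ cit.} I expect $\rho'(f)$ to be the class obtained by pushing out along the Frobenius-twisted map $f$ an extension that is tautological in $G$.

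Second, I will unwind $\rho$ through Proposition \ref{P:0.1}, which identifies $\mls Ext^1$ with $R^2\pi_*$ via gerbes and their bands. Under \eqref{E:4.6.1}, $f$ goes to the pullback along $\pi_t^{\crys BG}$ of the $\mathbf{G}_a$-torsor $f_*\xi$ on $BG$. On an object $U\hookrightarrow T$ this is the $\mls O_T$-torsor on $BG_U$ obtained by pushing out along $\mls O_U\rightarrow \mls O_T$, $a\mapsto\tilde a^p$; this is exactly the ring map underlying $\pi_t$ and the preceding lemma identifying $\Phi^*$. It lands in $R^1$ and is then sent to $R^2$ with coefficients in $\mls O_{X/W}$ through the Bockstein of \eqref{E:3.3.1}, as in Lemma \ref{L:3.6}(iii). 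Passing back through the inverse of $\xi_D$ by taking the band of the associated gerbe, I will identify this with the extension class of the Bockstein applied to $f\circ$Frobenius.

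The main obstacle is matching this Bockstein and gerbe-band description with the explicit construction in \cite{MR667344}, including the sign conventions in $\xi_D$ and in connecting maps. My approach is to reduce to a universal case. The map is natural in $G$, and étale locally $G^*$ of height one embeds into $\alpha_p^n$; dually, $G$ is a quotient of $\alpha_p^n$ or can be handled by naturality in $f$. So it suffices to treat $G=\alpha_p$ with $f$ the inclusion $\alpha_p\hookrightarrow\mathbf{G}_a$, checked on the universal object. There both classes are the image of the canonical extension $0\rightarrow \alpha_p\rightarrow \mathbf{G}_a\xrightarrow{F}\mathbf{G}_a\rightarrow 0$ under the same connecting map, and they can be compared directly. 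Finally, the second assertion about \eqref{E:crysmap} follows by applying $z_{\crys*}$.
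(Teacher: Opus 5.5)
Your reduction to the adjoint maps $\mls Lie _{G^*}\simeq \mls Hom (G, \mathbf{G}_a)\rightarrow \Phi _*\mls Ext^1(\underline G, \mls O_{X/k})$ is exactly the paper's first step. Your unwinding of $\rho $ (push $f$ out along the Frobenius-twisted map, then apply a Bockstein) is in line with the paper's first lemma. The gap is on the $\rho '$ side, and it has two parts.

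First, the reduction to $G=\alpha _p$ is false. A height one $G^*$ need not \'etale locally embed into $\alpha _p^n$; equivalently, $G$ need not be a quotient of $\alpha _p^n$. A homomorphism $G^*\rightarrow \alpha _p^n$ induces a map of $p$-Lie algebras, so an embedding forces the $p$-operation $\rho $ on $\mls Lie _{G^*}$ to vanish. For example, take $G^*=\mu _p$ and $G=\mathbf{Z}/p$, which is squarely in the coheight one case with $\mls Lie _{G^*}=\mls O_X$. Here $\mathrm{Hom}(\mu _p, \alpha _p)=\mathrm{Hom}(\alpha _p, \mathbf{Z}/p)=0$. So your reduction discards precisely the cases where $\rho $ is nonzero. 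Appealing to naturality in $f$ does not rescue it, since $f:G\rightarrow \mathbf{G}_a$ need not factor through $\alpha _p$ (take $1\mapsto a$ on $\mathbf{Z}/p$).

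Second, you never actually determine $\rho '$ (``I expect \ldots''). The closing claim that both classes are images of $0\rightarrow \alpha _p\rightarrow \mathbf{G}_a\xrightarrow{F}\mathbf{G}_a\rightarrow 0$ under ``the same connecting map'' is asserted, not shown. That extension lives in $\mathrm{Ext}^1(\mathbf{G}_a, \alpha _p)$, and you do not say how it produces a class in $\mls Ext^1(\underline{\alpha _p}, \mls O_{X/k})$. This comparison is the entire content of the proposition.

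The missing idea is to characterize $\rho '$ rather than unwind it.
\begin{itemize}
\item By \cite[4.3.8]{MR667344}, the map $\texttt{"p"}:\mls Ext^1(\underline G, \mls O_{X/k})\rightarrow \mls Hom (\underline G, \mls O_{X/k})$ is an isomorphism. It is obtained from the snake lemma for multiplication by $p$ on an extension.
\item Berthelot--Breen--Messing show that $\texttt{"p"}\circ \rho '$ is the map induced by $\Phi ^b:\mls O_X\rightarrow \Phi _*\mls O_{X/k}$. So it suffices to show that $\texttt{"p"}\circ \rho $ is also induced by $\Phi ^b$.
\item The paper writes the adjoint of $\rho $ as a composite of three maps: $\Phi ^b$; then the Bockstein $\bar \partial $ of the triangle $Ru_*\mls O_{X/W}\xrightarrow{p}Ru_*\mls O_{X/W}\rightarrow R\Phi _*\mls O_{X/k}$; then the natural map $\mls Ext^1(G, R\Phi _*\mls O_{X/k})\rightarrow \Phi _*\mls Ext^1(\underline G, \mls O_{X/k})$. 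This uses compatibility of the tautological class $\xi $ with these Bocksteins, which is where your second step fits.
\item It then checks directly that $\texttt{"p"}\circ \bar \partial $ is the identity.
\end{itemize}
This argument is uniform in $G$, never needs an explicit formula for $\rho '$, and requires no reduction to a special group scheme.
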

\begin{proof}
 By adjunction, to verify that $\rho = \rho ^\prime $ it suffices to show that the adjoint maps
 $$
{\mls Hom (G, \mls O_X) = \mls Lie _{G^*} }\rightarrow \Phi _*M^{\prime }_\crys (G)_{p=0}
 $$
agree. 

 The map $\rho '$ can be described as follows.  There is an isomorphism \cite[4.3.8]{MR667344}
\begin{equation}\label{E:p-map1}
\texttt{"p"}:M^{\prime }_\crys (G)_{p=0} = \mls Ext^1(\underline G, \mls O_{X/k})\rightarrow \mls Hom (\underline G, \mls O_{X/k})
\end{equation}
defined by sending  the class of an extension
$$
0\rightarrow \mls O_{X/k}\rightarrow \mls E\rightarrow \underline G\rightarrow 0
$$
to the map $\delta :\underline G\rightarrow \mls O_{X/k}$ obtained from the snake lemma applied to multiplication by $p$ on this sequence, noting that multiplication by $p$ on $\underline G$ and $\mls O_{X/k}$ is $0$.  

As discussed in \cite[Paragraph following 4.3.8]{MR667344} the composition
$$
\xymatrix{
\mls Hom (G, \mls O_X) \ar[r]^-{\rho ^\prime }& \Phi _*\mls Ext^1(\underline G, \mls O_{X/k})\ar[r]^-{\texttt{"p"}}& \Phi _*\mls Hom (\underline G, \mls O_{X/k})\simeq \mls Hom (G, \Phi _*\mls O_{X/k}),}
$$
where the last isomorphism is by adjunction noting that $\underline G = \Phi ^{-1}G$, is the map induced by $\Phi ^b:\mls O_X\rightarrow \Phi _*\mls O_{X/k}$.

There is also a map 
\begin{equation}\label{E:p-map2}
\texttt{"p"}:\mls Ext^1(G, R\Phi _*\mls O_{X/k})\rightarrow \mls Hom (G, \Phi _*\mls O_{X/k})
\end{equation}
defined similarly: For a  morphism $\lambda :G\rightarrow R\Phi _*\mls O_{X/k}[1]$ multiplication by $p$ on $\text{cone}(\lambda )$ defines a morphism
$$
G = \text{cone}(R\Phi _*\mls O_{X/k}\rightarrow \text{cone}(\lambda ))\rightarrow \text{cocone}(\text{cone}(\lambda )\rightarrow G)\simeq R\Phi _*\mls O_{X/k}.
$$
By the construction the map \eqref{E:p-map2} equals the composition of the natural map $\mls Ext^1(G, R\Phi _*\mls O_{X/k})\rightarrow \Phi _*\mls Ext^1(\underline G, \mls O_{X/k})$ with the map \eqref{E:p-map1}.

Now recall that $\Phi _* = u_{X/S*}$ so we also have a distinguished triangle (using that $X/k$ is smooth; see \cite[7.24]{BO})
$$
\xymatrix{
Ru_{X/W(k)*}\mls O_{X/W}\ar[r]^-{p}& Ru_{X/W(k)*}\mls O_{X/W}\ar[r]& R\Phi _*\mls O_{X/k}\ar[r]^-\partial &Ru_{X/W(k)*}\mls O_{X/W}[1].}
$$
The map $\partial $ induces by composing with the reduction map a morphism $\bar \partial :R\Phi _*\mls O_{X/k}\rightarrow R\Phi _*\mls O_{X/k}[1]$, which then induces a morphism (which we denote by the same symbol)
$$
\bar \partial :\mls Hom (G, \Phi _*\mls O_{X/k})\simeq \mls Hom (G, R\Phi _*\mls O_{X/k})\rightarrow \mls Ext^1(G, R\Phi _*\mls O_{X/k}).
$$

\begin{lem} The composition
$$
\xymatrix{
\mls Lie _{G^*}\simeq \mls Hom (G, \mls O_X)\ar[r]^-{\Phi ^b}& \mls Hom (G, \Phi _*\mls O_{X/k})\ar[r]^-{\bar \partial }& \mls Ext^1(G, R\Phi _*\mls O_{X/k})\ar[r]& \Phi _*\mls Ext^1(\underline G, \mls O_{X/k})}
$$
is the map adjoint to $\rho $.
\end{lem}
\begin{proof}
Let $\xi _\ET \in H^1(BG_{\ET }, G)$ be the tautological class, which defines as in \ref{P:4.2} a map
$$
\mls RHom (G, R\Phi _*\mls O_{X/k})\rightarrow R\pi _*R\Phi _*^{BG}\mls O_{BG/k}.
$$
The result then follows from the definition of $\rho $ and the fact that the diagram
$$
\xymatrix{
u_{*}\mls Ext ^1(\underline G, \mls O_{X/W})\ar[r]^-\xi & u_*R^2\pi _*^\crys \mls O_{BG/W}\\
u_*\mls Hom (\underline G, \mls O_{X/k})\ar[u]_-\partial \ar[r]^-\xi & u_*R^1\pi ^\crys _*\mls O_{BG/k}\ar[u]^-{\partial}\\
\mls Hom (G, R\Phi _*\mls O_{X/k})\ar[r]^-{\xi _\ET }\ar[u]& R^1\pi _*R\Phi _*^{BG}\mls O_{BG/k}\ar[u]}
$$
commutes, where the bottom vertical arrows are induced by adjunction.
\end{proof}   

\begin{lem}
The composition
$$
\xymatrix{
\mls Hom (G, \Phi _*\mls O_{X/k})\ar[r]^-{\bar \partial }& \mls Ext^1(G, R\Phi _*\mls O_{X/k})\ar[r]& \Phi _*\mls Ext^1(\underline G, \mls O_{X/k})\ar[r]^-{ \texttt{"p"}}& \mls Hom (G, \Phi _*\mls O_{X/k})}
$$
is the identity map.
\end{lem}
\begin{proof}
Indeed given a map $\lambda  :G\rightarrow \Phi _*\mls O_{X/k}$ (defined locally on $X$) the image under the composition is obtained by first pulling back the sequence
$$
\xymatrix{
Ru_*\mls O_{X/W}\ar[r]^-p&  Ru_*\mls O_{X/W}\ar[r]& R\Phi _*\mls O_{X/k}}
$$
along $\lambda  $, and then considering multiplication by $p$ on the resulting extension as in the construction of the map \eqref{E:p-map2}.  It follows immediately from this construction that this recovers $\lambda  $.
\end{proof}
This lemma completes the proof of \ref{P:4.10} since it implies that both $\rho $ and $\rho ^\prime $ compose with the map \eqref{E:p-map2} to the map induced by $\Phi ^b$.
\end{proof}

\begin{cor}\label{C:4.14} If $G$ has {height} $1$ then the map \eqref{E:4.6.1} is an isomorphism.
\end{cor}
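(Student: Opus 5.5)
The plan is to reduce the statement about stacks to a statement about the crystalline site, and then to combine Proposition \ref{P:4.10} with the computation of Berthelot--Breen--Messing \cite[4.3.6]{MR667344} in the height $1$ case.

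First I would check that both sides of \eqref{E:4.6.1} are quasi-coherent sheaves in degree $0$ on $X^\crys_{p=0}$. For the source $\pi_t^{\crys *}\mls Lie_{G^*}$ this holds because $\mls Lie_{G^*}$ is a vector bundle. For the target $R^1\pi^\crys_{p=0*}\mls O_{BG^\crys_{p=0}}\simeq M_\crys(G)_{p=0}$ it follows from Lemma \ref{L:3.6}, transported along $\phi^*X^\Prism\simeq X^\crys$, together with the flatness in Lemma \ref{L:3.2}. It is therefore enough to show that the map is an isomorphism after pullback along a flat cover of $X^\crys_{p=0}$. Using Remark \ref{R:3.3}, I would take étale-local Frobenius-liftable smooth lifts $\widetilde U_i$ of $U_i\subset X$. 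The corresponding objects $U_i\hookrightarrow T$ of the crystalline site, namely $T=\widetilde U_i$ (or their PD-envelopes reduced mod $p$), give maps $g_T$ that jointly cover. So it suffices to show that $g_T^*$ of \eqref{E:4.6.1} is an isomorphism for all objects $U\hookrightarrow T$ of $\mathrm{CRIS}(X/k)$.

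By the lemma identifying $\Phi^*\mls G$ with $T\mapsto \Gamma(T,g_T^*\pi_t^{\crys*}\mls G)$, the pullback of the source along $g_T$ is $(\Phi^*\mls Lie_{G^*})_T$. By the discussion following Proposition \ref{P:0.1}, the pullback of the target is $M'_\crys(G)_{p=0,T}=\mls Ext^1_{(X/k)_\crys}(\underline G,\mls O_{X/k})_T$. Under these identifications $g_T^*$ of \eqref{E:4.6.1} becomes, by construction, the map $\rho:\Phi^*\mls Lie_{G^*}\to M'_\crys(G)_{p=0}$. By Proposition \ref{P:4.10} we have $\rho=\rho'$, where $\rho'$ is the map of \cite[4.3.6]{MR667344}, and that result asserts that $\rho'$ is an isomorphism exactly in the height $1$ situation. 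This gives the corollary.

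The main obstacle is the matching of conventions in the middle step. One has to check that the hypothesis of \cite[4.3.6]{MR667344} (stated there in terms of $G$ versus $G^*$, with the "coheight" convention of the introduction) is exactly "$G$ of height $1$" with Lie algebra $\mls Lie_{G^*}$ appearing as here. One also has to check that evaluating the sheaf $M_\crys(G)_{p=0}$ on the stack at $T$ really recovers $\mls Ext^1$ on $\mathrm{CRIS}(X/k)$, rather than only its $W$-version reduced mod $p$.

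If the convention check fails, I would argue directly. Both sides are locally free of the same rank: $\mls Lie_{G^*}$ has rank $\log_p|G|$ in height $1$, and the Dieudonné crystal also has that rank. Moreover $\texttt{"p"}\circ\rho'$ is the map induced by $\Phi^b$, which for height $1$ identifies $\Phi^*\mls Hom(G,\mls O_X)$ with $\mls Hom(\underline G,\mls O_{X/k})$, since homomorphisms from a Frobenius-killed $G$ factor through the Frobenius. Together with the fact that $\texttt{"p"}$ is an isomorphism \cite[4.3.8]{MR667344}, this forces $\rho$ to be an isomorphism.
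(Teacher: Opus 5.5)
Your main argument is the paper's proof. You pull \eqref{E:4.6.1} back along the flat cover coming from Frobenius-lifted smooth lifts (Remark \ref{R:3.3}), identify the result on the crystalline site with $\rho$, and conclude by Proposition \ref{P:4.10} and \cite[4.3.6]{MR667344}. Your phrasing via objects of $\text{CRIS}(X/k)$ (the reductions of $U_i\hookrightarrow\widetilde U_i$) is, if anything, more accurate than the paper's, since \eqref{E:4.6.1} lives on $X^\crys_{p=0}$.

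The convention question you leave open has a definite answer, and that answer undercuts your fallback. As the introduction says, \cite[4.3.6]{MR667344} concerns coheight $1$, i.e.\ $G^*$ of height $1$. This is also exactly the hypothesis under which the corollary is applied in \S6 (``Suppose furthermore that $G^*$ has height $1$''), so ``$G$ has height $1$'' in the statement has to be read that way. Taken literally the statement is false: for $G=\mu_p$ over $k$, the source of \eqref{E:4.6.1} comes from $\mls Lie_{G^*}=\mls Hom(\mu_p,\mathbf G_a)=0$, while the target is the nonzero Dieudonn\'e module of $\mu_p$.

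Accordingly, both steps of your direct argument fail for a height-one $G$ such as $\mu_p$. First, $\mls Lie_{G^*}$ has rank $\log_p|G|$ only when $G^*$ has height one. Second, the map $\Phi^*\mls Hom(G,\mls O_X)\to\mls Hom(\underline G,\mls O_{X/k})$ induced by $\Phi^b$ is not surjective for $G=\mu_p$. Its target is identified by $\texttt{"p"}$ with $M'_\crys(\mu_p)_{p=0}\neq 0$. Concretely, $z\mapsto\log(\tilde z^{\,p})$ is a nonzero homomorphism $\underline{\mu_p}\to\mls O_{X/k}$, where $\tilde z$ is a local lift of $z$ to $\mls O_T^*$ and $\log$ is the divided-power logarithm; this series terminates in characteristic $p$.

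When $G^*$ does have height one, bijectivity of that map is equivalent, via $\texttt{"p"}$, to \cite[4.3.6]{MR667344} itself. So the fallback is not an independent proof in either case. The fix is simply to run your main argument under the hypothesis that $G^*$ has height $1$.
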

\begin{proof}
By \cite[4.3.6]{MR667344} and \ref{P:4.10} the map \eqref{E:4.6.1} becomes an isomorphism after pulling back to the crystalline site of $X/W(k)$. Now recall (as in \ref{R:3.3}) that if $X = \cup _iU_i$ is an open cover for which there exist smooth lifts $\widetilde U_i$ with lifts of Frobenius then we get a flat cover $\coprod _i\widetilde U_i\rightarrow X^\crys $.  Since each $U_i\hookrightarrow \widetilde U_i$ defines an object of $\text{CRIS}(X/W(k))$ it follows that the map \eqref{E:4.6.1} is an isomorphism on a flat cover of $X^\crys $.
\end{proof}

\section{The case of a point}
In the case when $X = \Sp (k)$ the theory reduces to the classical theory \cite[2.3]{FontaineJannsen} as we now explain.

\begin{pg}
Consider $\Spf (W(k)[u,t]/(ut-p))$ 
with action of $\mathbf{G}_m$ given by assigning $u$ to have (resp. $t$) weight $1$ (resp. $-1$), so that $k^{\mc N}\simeq [\Spf (W(k)[u,t]/(ut-p))/\mathbf{G}_m]$. Then a quasi-coherent sheaf $\mls M$ on $k^{\mc N}$ is equivalent to the data of a sequence of quasi-coherent $W$-modules with maps
$$
\xymatrix{
\cdots \ar@<1ex>[r]& M^{n+1}\ar@<1ex>[l]\ar@<1ex>[r]^-{t_n}& M^n\ar@<1ex>[l]^-{u_n}\ar@<1ex>[r]^-{t_{n-1}}& M^{n-1}\ar@<1ex>[r]\ar@<1ex>[l]^-{u_{n-1}}& \cdots \ar@<1ex>[l],}
$$
for which $u_n\circ t_n = p$ and $t_n\circ u_n = p$ for all $n$.  We will write $(M^\bullet, u_\bullet, t_\bullet )$ for such a system.

Given a quasi-coherent sheaf  $\mls M$ on $k^{\mc N}$ with associated system $(M^\bullet , u_\bullet, t_\bullet )$ we can consider the $W(k)$-modules
$$
M^{-\infty }:= \text{colim}_{t}M^n, \ \ M^\infty := \text{colim}_uM^n.
$$
Then $M^{-\infty }$ is the restriction of $\mls M$ to the open substack $\Spf (W)\simeq k^{\mc N}_{t\neq 0}$ and $M^\infty $ is the restriction to $\Spf (W) \simeq k^{\mc N}_{u\neq 0}$.
Therefore to descend $\mc M$ to a quasi-coherent sheaf on $k^\syn $ one has to specify an isomorphism
$$
\varphi :\F _X^*M^\infty \simeq M^{-\infty }.
$$
\end{pg}

\begin{lem}\label{L:5.2} Let $\mls M\in \PG (\Sp (k))$ be an object with associated system $(M^\bullet, u_\bullet, t_\bullet )$.  Then the maps $t_n:M^{n+1}\rightarrow M^n$ (resp. $u_n:M^n\rightarrow M^{n+1}$) are isomorphisms for $n\leq -1$ (resp. $n\geq 1$).
\end{lem}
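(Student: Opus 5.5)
Since $\mls M$ lies in $\PG (\Sp (k))$, it is locally a two-term complex of vector bundles $\mc V^{-1}\rightarrow \mc V^0$ whose differential becomes an isomorphism after inverting $p$. By Lemma \ref{L:3.2}, $k^{\mc N}$ is flat over $\mathbf{Z}_p$, so the argument following Remark \ref{R:3.3} shows that this differential is injective. Hence $\mls M$ is a quasi-coherent sheaf, and the pullback of $\mls M$ to $\Spf (W[u,t]/(ut-p))$ is a finitely generated, $p$-torsion free graded module $N=\bigoplus _nM^n$ of projective dimension at most $1$. In particular each $M^n$ is a finitely generated $W$-module without $p$-torsion, hence free, and the maps $u_n$, $t_n$ are injective because $u_nt_n=p$ and $t_nu_n=p$.

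The real input is the Hodge--Tate weight condition. The fiber of $\mls M$ at the Hodge--Tate point $B\mathbf{G}_m\subset k^{\mc N}$ (that is, $t=u=0$, reduction mod $p$) is the graded $k$-vector space, or complex,
$$N\otimes ^{\mathbf{L}}_{W[u,t]/(ut-p)}k.$$
I will compute it using the Koszul-type resolution of $k$ over $W[u,t]/(ut-p)$: since $p=ut$, we have $k=W[u,t]/(ut-p,u,t)$, and $u,t$ form a regular sequence in $W[u,t]/(ut-p)\simeq W[u,t]/(ut-p)$. Hence the fiber in weight $n$ is computed by the complex
$$M^{n}\rightarrow M^{n+1}\oplus M^{n-1}\rightarrow M^n$$
up to sign conventions and shifts. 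Its top cohomology is $M^n/(u_{n-1}M^{n-1}+t_nM^{n+1})$, where $u_{n-1}M^{n-1}$ and $t_nM^{n+1}$ are the images in $M^n$. The hypothesis that the Hodge--Tate weights lie in $[0,1]$ says that these graded pieces vanish for weights outside $\{0,1\}$. I must check the indexing convention carefully here, using the grading of $u$ and $t$ to match weight $n$ with $M^n$. The remark that $\mc O(1)$ has weight $-1$ is where a shift by one could sneak in, so I will fix the convention by testing the statement on $\mc O$ and on $\mc O\{1\}$.

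Given vanishing for $n\le -1$ and $n\ge 2$ (or whatever the correct window turns out to be), we obtain $M^n=u_{n-1}M^{n-1}+t_nM^{n+1}$ for those $n$. Now argue by a Nakayama-type induction. Since $N$ is finitely generated, $M^n$ is eventually stabilized by $u$ as $n\to \infty$ (it equals $M^\infty $ up to isomorphism) and by $t$ as $n\to -\infty $.

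For $n\le -1$, write $M^n=t_nM^{n+1}+u_{n-1}M^{n-1}$. Then
$$u_{n-1}M^{n-1}=u_{n-1}t_{n-1}M^n+u_{n-1}u_{n-2}M^{n-2}=pM^n+\cdots ,$$
and iterating shows $M^n=t_nM^{n+1}+pM^n+u^{N}M^{n-N}$ for large $N$. Because $N$ is finitely generated, the terms $M^{m}$ for $m\ll 0$ are generated from finitely many weights via $t$, so $u^NM^{n-N}\subset pM^n+t_nM^{n+1}$ once $N$ is large. Nakayama (using that $M^n$ is finite over $W$) then gives that $t_n$ is surjective, and it is injective by the above, hence an isomorphism. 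The case $n\ge 1$ for $u_n$ is symmetric.

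I expect the main obstacle to be the bookkeeping in the derived fiber: the exact indexing of the weight window and the claim that the "$u^N$" tail lands in $pM^n$. I would first try to avoid the tail estimate altogether by working locally. A finitely generated graded module is generated in finitely many degrees, so for $n$ beyond that generating range the surjectivity of $t_n$ or $u_n$ is automatic. I would then use downward induction from the generating range, applying the graded Nakayama argument at each step.
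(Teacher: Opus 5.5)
\textbf{The injectivity half of your argument rests on a false claim.} You assert that $N=\bigoplus_n M^n$ is $p$-torsion free, so that each $M^n$ is free over $W$ and $u_n$, $t_n$ are injective ``because $u_nt_n=t_nu_n=p$''. The opposite is true. $\mls M$ is the cokernel of an injection $\mc V^{-1}\hookrightarrow \mc V^0$ that becomes an isomorphism after inverting $p$. So $N$ is killed by a power of $p$, and every $M^n$ is a finite-length $W$-module; a $p$-torsion free such $N$ would be $0$. Concretely, the gauges attached to triples $(E,E',\rho)$ just before Lemma \ref{L:5.8} have $t_n=0$ for $n\geq 0$ and $u_n=0$ for $n<0$. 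So injectivity of $t_n$ for $n\leq -1$ and of $u_n$ for $n\geq 1$ is not proved.

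It also cannot be recovered from the only consequence of the weight hypothesis you use, namely vanishing of the top cohomology $M^n/(u_{n-1}M^{n-1}+t_nM^{n+1})$ for $n\notin\{0,1\}$ (the correct window in the lemma's indexing). Take $A=W[u,t]/(ut-p)$ and the graded module $A/(t)=k[u]$. Its top cohomology lives only in weight $0$, yet $t_{-1}\colon M^0=k\to M^{-1}=0$ is not injective. It violates the weight condition only through the middle cohomology in weight $-1$.

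\textbf{Surjectivity and the repair.} Your surjectivity argument is sound. The tail estimate holds because $M^m=tM^{m+1}$ for $m\ll 0$, hence $uM^m=pM^{m+1}$. More cleanly, induct upward from $n\ll 0$: if $t_{n-1}$ is onto then $u_{n-1}M^{n-1}=pM^n$, and Nakayama gives $M^n=t_nM^{n+1}$. For $u_n$ the induction runs downward from $n\gg 0$.

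For injectivity you must use exactness in the middle of your weight-$n$ complex. Apply it to the cycle $(z,0)$ with $t_nz=0$; this yields $\ker t_n=u_n(\ker t_{n-1})$. Now induct upward from $n\ll 0$, where $t_n$ is bijective: the $t_n$ are surjections between finite-length modules, so their lengths eventually stabilize. This gives injectivity for all $n\leq -1$. Symmetrically, $\ker u_n=t_n(\ker u_{n+1})$ handles $n\geq 1$.

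\textbf{Comparison with the paper.} Repaired this way, your route genuinely differs from the paper's, which avoids the Koszul bookkeeping. Since $\mls M\mapsto M^n$ is exact, the paper uses the five lemma to reduce to the two vector bundles of a global presentation $\mc V^{-1}\to\mc V^0$ with Hodge--Tate weights in $[0,1]$. It obtains this presentation by realizing the corresponding group scheme as the kernel of a map of abelian varieties. Such bundles restrict on $k^{\mc N}$ to sums of copies of $\mls O$ and $\mls O\{-1\}$, where the statement is immediate. Your version needs no global presentation and uses only the weight condition on $\mls M$ itself. The price is that you need the full derived fiber, not just its top cohomology.
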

\begin{proof}
    Note first that $\mls M$ is the cone of a map of vector bundles $\alpha :\mls V^{-1}\rightarrow \mls V^0$ with the $\mls V^i$ of Hodge-Tate weights in $[0,1]$.  This can be verified directly using commutative algebra, or by noting that by \cite[7.1.1]{madapusi2025perfectfgaugesfiniteflat} the object $\mls M$ is the prismatic Dieudonn\'e module of a finite flat abelian group scheme $G$, and then embedding $G$ into an abelian variety realizing $G$ as the kernel of a map of abelian varieties.  Taking Dieudonn\'e modules and using \cite[3.5.5]{mondal2025dieudonnetheoryclassifyingstacks} we then get the presentation of $\mls M$.

    For a given $n$ the functor on quasi-coherent sheaves on $k^{\mc N}$ sending $\mls M$ to $M^n$ is exact.  It therefore suffices to prove the analogous result for a vector bundle ${\mc V}$ on $k^{\syn }$ with Hodge-Tate weights in $[0,1]$. The restriction to $k^{\mc N}$ of  such a vector bundle is of the form $V^0 \otimes _k \mls O_{k^\mc N }\oplus V^1\otimes _k\mls O_{k^\mc N }\{-1\}$ for some $W$-modules $V^0$ and $V^1$ (cf.~\cite[Prop.~3.3.23,~(3.3.8)]{mondal2025dieudonnetheoryclassifyingstacks}), and here the result is immediate.
\end{proof}

\begin{pg} In particular, for $\mls M\in \PG (\Sp (k))$ the natural maps $M^1\rightarrow M^{\infty }$ and $M^0\rightarrow M^{-\infty }$ are isomorphisms.  The isomorphism $\varphi $ therefore defines an identification $M^0\simeq \F ^*M^1$, and the maps $t_0:M^1\rightarrow M^0$ and $u_0:M^0\rightarrow M^1$ can be viewed as maps $V:M^1\rightarrow \F ^*M^1$ and $\F :F^*M^1\rightarrow M^1$, giving $M^1$ the structure of a module over the Dieudonn\'e ring $W\langle \F, V\rangle $. 
\end{pg}

\begin{cor}
    The category $\PG (\Sp (k))$ is equivalent to the category of finite length (as $W(k)$-modules) Dieudonn\'e modules.
\end{cor}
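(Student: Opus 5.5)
We define a functor from $\PG (\Sp (k))$ to finite length Dieudonn\'e modules by $\mls M\mapsto (M^1, \F , V)$, using the preceding paragraph, and show it is an equivalence. The first step is to check that $M^1$ has finite length. Since $\mls M$ is locally the cone of a map $\alpha :\mls V^{-1}\rightarrow \mls V^0$ that is an isomorphism after inverting $p$, and the functor $\mls M\mapsto M^n$ is exact (as in the proof of \ref{L:5.2}), $M^1$ is the cokernel of an injective map of finite free $W$-modules of equal rank. Hence it is finite length. That $\F V = V\F = p$ is immediate from $u_0t_0 = t_0u_0 = p$. The semilinearity of $\F $ and $V$ comes from the identification $M^0\simeq \F ^*M^1$ via $\varphi $.

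For the inverse functor, let $D$ be a finite length Dieudonn\'e module. We build the graded system by setting $M^n = D$ for $n\geq 1$ with $u_n = \mathrm{id}$ and $t_n = p$, and $M^n = \F ^*D$ for $n\leq 0$ with $t_n = \mathrm{id}$ and $u_n = p$. The maps $u_0$ and $t_0$ are given by the linearizations of $\F $ and $V$. The gluing isomorphism $\varphi $ is the tautological identification $\F ^*M^\infty = \F ^*D = M^{-\infty }$. This defines a quasi-coherent sheaf on $k^\syn $. By \ref{L:5.2}, together with the fact that $\varphi $ identifies $M^0$ with $\F ^*M^1$, any $\mls M\in \PG $ is canonically isomorphic to the object reconstructed from its $M^1$. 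Hence the functor is fully faithful. Morphisms of systems are determined by their components in degrees $0$ and $1$, because the transition maps outside $[0,1]$ are isomorphisms in the appropriate direction and compatibility with $\varphi $ forces the degree $0$ component to be $\F ^*$ of the degree $1$ component.

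The main obstacle is essential surjectivity: showing that the sheaf constructed from an arbitrary finite length $D$ actually lies in $\PG $. That is, it must have Hodge-Tate weights in $[0,1]$ and admit a two-term resolution by vector bundles on $k^\syn $ of the form $V^0\otimes \mls O\oplus V^1\otimes \mls O\{-1\}$. I would try to lift $D$ to a two-term resolution by finite free Dieudonn\'e modules. One option is to use classical Dieudonn\'e theory: $D$ arises from a finite group scheme $G$ over $k$, which we embed in an abelian variety or a $p$-divisible group, giving $0\rightarrow \mathbb D(H_1)\rightarrow \mathbb D(H_2)\rightarrow D\rightarrow 0$ with the $\mathbb D(H_i)$ finite free over $W$ and stable under $\F , V$. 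A finite free Dieudonn\'e module $N$ corresponds, via the same recipe, to the vector bundle with $V^1 = N/\F N$-type Hodge filtration data, and the displayed description from \cite[3.3.23]{mondal2025dieudonnetheoryclassifyingstacks} shows that it is a vector bundle with weights in $[0,1]$. Taking the cone then gives an object of $\PG $. Alternatively, one can bypass this by citing \cite[7.1.1]{madapusi2025perfectfgaugesfiniteflat} and \cite[3.5.26]{mondal2025dieudonnetheoryclassifyingstacks} to identify $\PG (\Sp (k))$ with $\FFG (k)$, and then check compatibility with the classical Dieudonn\'e functor \cite[2.3]{FontaineJannsen}.
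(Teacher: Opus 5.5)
Your proposal is correct and follows the paper's argument: the paper gives no proof beyond Lemma \ref{L:5.2} and the paragraph after it, namely the identification $M^0\simeq \F^*M^1$ via $\varphi$, under which $u_0$ and $t_0$ become $\F$ and $V$. Where you go further is in spelling out full faithfulness and essential surjectivity, the latter of which the paper leaves implicit (it comes from the equivalence with $\FFG$ invoked in the proof of Lemma \ref{L:5.2}, i.e.\ your second option); note also that your finite-length step needs the global two-term presentation of $\mls M$ over $k^{\mc N}$ obtained in that proof, not merely a quasi-syntomic local one.
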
\qed

Consider the inclusions  $j_t:[\Sp (k[t])/\mathbf{G}_m]\hookrightarrow k^{\mc N}_{p=0}$, $j_u:[\Sp (k[u])/\mathbf{G}_m]\hookrightarrow k^{\mc N}_{p=0}$, and $j_0:B\mathbf{G}_m\rightarrow k^{\mc N}_{p=0}$.  Let $\mls M\in \PG (\Sp (k))$ be an $F$-gauge.

 \begin{cor}\label{C:5.6}
 For an $F$-gauge $\mls M\in \PG (\Sp (k))$ the sequence
 $$
 \mls M_{p=0}|_{k^{\mc N}}\rightarrow j_{t*}j_t^*\mls M_{p=0}\oplus j_{u*}j_u^*\mls M_{p=0}\rightarrow j_{0*}j_0^*\mls M_{p=0}
$$
is a fiber sequence, where the maps are induced by restriction and all functors are derived.
\end{cor}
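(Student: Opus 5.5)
The plan is to reduce to the structure sheaf of $k^{\mc N}_{p=0}$, where the statement becomes an elementary exact sequence of graded modules over $k[u,t]/(ut)$. The reduction uses that all functors involved are exact triangulated functors of $\mls M$, and that $\mls M$ is built from vector bundles.

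First, as in the proof of \ref{L:5.2}, I will present $\mls M$ as the cone of a map $\alpha :\mls V^{-1}\rightarrow \mls V^0$ of vector bundles on $k^\syn $ with Hodge–Tate weights in $[0,1]$. Both source and target of the displayed sequence are triangulated in $\mls M$:
\begin{itemize}
\item derived reduction modulo $p$ is exact;
\item restriction to $k^{\mc N}$ is exact;
\item the derived functors $j_{*}j^{*}$ are exact;
\item the maps are natural restriction (unit) maps.
\end{itemize}
Hence it suffices to show that the sequence is a fiber sequence for $\mls V^{-1}$ and $\mls V^0$ separately. By the description recalled in the proof of \ref{L:5.2}, the restriction of such a bundle to $k^{\mc N}$ is a direct sum of copies of $\mls O_{k^{\mc N}}$ and $\mls O_{k^{\mc N}}\{-1\}$. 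The statement is compatible with direct sums, so we are reduced to $\mls O_{k^{\mc N}_{p=0}}$ twisted by a line bundle $\mc L$ pulled back from $B\mathbf{G}_m$ (a weight shift).

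Next, since $\mc L$ is a line bundle, hence flat, the derived pullbacks $j^*\mc L$ are the underived ones. Each $j$ is a closed immersion, hence affine, so the projection formula gives $j_*j^*\mc L\simeq \mc L\otimes j_*\mls O$. The sequence in question is therefore $\mc L$ tensored with
$$
\mls O_{k^{\mc N}_{p=0}}\rightarrow j_{t*}\mls O\oplus j_{u*}\mls O\rightarrow j_{0*}\mls O .
$$
On the atlas $\Sp (k[u,t]/(ut))$ this is the $\mathbf{G}_m$-equivariant complex of graded modules
$$
0\rightarrow k[u,t]/(ut)\rightarrow k[t]\oplus k[u]\rightarrow k\rightarrow 0,
$$
with the second map being difference of evaluations at $0$. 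One checks directly that it is a short exact sequence, since $k[u,t]/(ut)$ is the fiber product $k[t]\times _k k[u]$. Tensoring with the flat module $\mc L$ preserves exactness, so we obtain a fiber sequence in $\mls D_\qcoh (k^{\mc N}_{p=0})$.

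The only step needing care is the first: ensuring the presentation $\mls M\simeq \text{cone}(\alpha )$ exists globally on $k^\syn $ (this is supplied by the argument in \ref{L:5.2} via \cite[7.1.1]{madapusi2025perfectfgaugesfiniteflat} and \cite[3.5.5]{mondal2025dieudonnetheoryclassifyingstacks}), and that the maps in the statement are compatible with this reduction. The latter holds because they are the natural units of the adjunctions $j^*\dashv j_*$, hence functorial in $\mls M$. After that the argument is a computation.
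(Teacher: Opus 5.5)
Your proposal is correct and is essentially the paper's argument: both use the projection formula $j_*j^*\mls N\simeq \mls N\otimes j_*\mls O$ for the affine maps $j_t, j_u, j_0$ to reduce to the structure sheaf, and then check exactness of $0\rightarrow k[u,t]/(ut)\rightarrow k[u]\oplus k[t]\rightarrow k\rightarrow 0$ on the atlas of $k^{\mc N}_{p=0}$. The only difference is your preliminary reduction to vector bundles via the cone presentation from \ref{L:5.2}, which is unnecessary: the projection formula holds for any quasi-coherent complex, so you can tensor the structure-sheaf sequence with $\mls M_{p=0}$ directly (as the paper does), and the hypothesis $\mls M\in \PG (\Sp (k))$ plays no role.
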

\begin{proof} 
The sequence in question is obtained by tensoring the corresponding sequence for $\mls O_{k^\syn }$ so it suffices to show the result for $\mls O_{k^\syn }$.  Working locally on $k^{\mc N}$ the result in this case follows from noting that the sequence
$$
0\rightarrow k[u, t]/(ut)\rightarrow k[u]\oplus k[t]\rightarrow k\rightarrow 0
$$
is exact. Here, the map $k[u,t]/(ut) \to k[u] \oplus k[t] $ is induced by sending a polynomial $g(u,t) \mapsto (g(u,0), g(0,t)).$ Further, the map $k[u] \oplus k[t] \to k$ is defined by $(x(u), y(t)) \mapsto x(0) - y(0).$\end{proof}

\begin{cor}\label{C:isocheck} A morphism $f:\mls M\rightarrow \mls M'$ in $\PG (\Sp (k))$ is an isomorphism if and only if the induced morphism of $W$-modules $j_{\text{\rm dR}}\mls M\rightarrow j_{\text{\rm dR}}^*\mls M'$ is an isomorphism.
\end{cor}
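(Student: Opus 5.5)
The plan is to reduce everything to the explicit description of quasi-coherent sheaves on $k^{\mc N}$ as graded systems $(M^\bullet, u_\bullet, t_\bullet)$ from the preceding paragraphs. The ``only if'' direction is trivial, since $j_{\dR}^*$ is a functor. For the converse, let $f:\mls M\rightarrow \mls M'$ be a morphism in $\PG (\Sp (k))$ such that $j_{\dR}^*f$ is an isomorphism. The restriction along $j_{\dR}$ is the restriction to $k^{\mc N}_{t\neq 0}\simeq \Spf (W)$, so $j_{\dR}^*\mls M = M^{-\infty}$. By Lemma \ref{L:5.2} the natural map $M^0\rightarrow M^{-\infty }$ is an isomorphism, and the same holds for $\mls M'$. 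Hence the hypothesis says exactly that $f^0:M^0\rightarrow M'^0$ is an isomorphism.

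Next I would use the descent datum. Since $f$ is a morphism on $k^\syn$, it is compatible with the gluing isomorphisms $\varphi :\F ^*M^\infty \simeq M^{-\infty }$. So $\F ^*(f^\infty )$ is identified with $f^{-\infty }$, which is an isomorphism. Because $\F $ is an automorphism of $W(k)$ ($k$ is perfect), $\F ^*$ is an equivalence, so $f^\infty$ is an isomorphism. By Lemma \ref{L:5.2} again, $M^1\simeq M^\infty$ compatibly with $f$, so $f^1:M^1\rightarrow M'^1$ is an isomorphism.

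It remains to handle the other degrees. For $n\geq 1$ the maps $u_n$ are isomorphisms, so $M^n\simeq M^1$ compatibly with $f$ and $f^n$ is an isomorphism. For $n\leq 0$ the maps $t_n$ are isomorphisms, so $M^n\simeq M^0$ and $f^n$ is an isomorphism. Since a morphism of quasi-coherent sheaves on $k^{\mc N}$ is the same thing as a compatible family $(f^n)$, the map $f|_{k^{\mc N}}$ is an isomorphism. The map $k^{\mc N}\rightarrow k^\syn$ is a cover, so $f$ itself is an isomorphism.

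The one point requiring care is that objects of $\PG$ are complexes rather than sheaves. By the discussion before Lemma \ref{L:3.6}, using flatness from Lemma \ref{L:3.2}, such an object is a quasi-coherent sheaf placed in degree $0$, and the functor $\mls M\mapsto M^n$ is exact. Therefore the degreewise argument above is legitimate. I do not expect a genuine obstacle beyond correctly matching $j_{\dR}$ with the $t\neq 0$ locus and keeping track of the Frobenius twist in $\varphi$.
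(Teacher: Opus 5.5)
Your proof is correct and follows the paper's approach. The paper also passes to the graded system $(M^\bullet,u_\bullet,t_\bullet)$ and uses Lemma \ref{L:5.2} to reduce the isomorphism question to a single graded piece. You additionally make explicit a step the paper's one-line proof leaves implicit: passing between $M^0\simeq M^{-\infty}$ and $M^1\simeq M^{\infty}$ via the gluing isomorphism $\varphi$ and the bijectivity of Frobenius on $W(k)$.
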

\begin{proof}
 This is equivalent to the statement that the corresponding map of systems  $(M^\bullet, u_\bullet, t_\bullet )\rightarrow (M^{\prime \bullet }, u_\bullet ', t_\bullet ')$ is an isomorphism if and only if the induced map $M^1\rightarrow M^{\prime 1}$ is an isomorphism, which follows from \ref{L:5.2}.    
\end{proof}

\begin{pg} For studying height $1$ group schemes we are particularly interested in the Dieudonn\'e module associated to a pair $(E, \rho )$, consisting of a $k$-{vector space} $E$ with a map $\rho :F^*E\rightarrow E$.  In fact, it is convenient to consider a slightly more general situation.   Let  $(E, E', \rho )$ be a triple consisting of two $k$-vector spaces $E$ and $E'$ and a map $\rho :E'\rightarrow E$. Such a triple $(E, E', \rho )$ defines a system (called a Gauge) $(M^\bullet, u_\bullet, t_\bullet )$ by setting $M^n := E$ for $n>0$, $M^n = E'$ for $n\leq 0$, $u_n = \text{id}_E$ for $n>0$, $u_0 = \rho $, $u_n = 0$ for $n<0$, $t_n = 0$ for $n\geq 0$, and $t_n = \text{id}_{E'}$ for $n<0$.  
\end{pg}

\begin{lem}\label{L:5.8}
The natural map
$$
M^\bullet \rightarrow \text{\rm cocone}({E\otimes _kk[u]\oplus E'\otimes _kk[t]\xrightarrow{\text{\rm id}\otimes \nu -\rho \otimes \nu } E})
$$
is an isomorphism, where $\nu: k[u] \to k$ is the canonical quotient map.
\end{lem}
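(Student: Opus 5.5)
The plan is to check the claim one graded piece at a time. Since $p=0$ in this situation, $u$ and $t$ satisfy $ut=0$, so every term involved is a graded module over $k[u,t]/(ut)$ with $u$ of weight $1$ and $t$ of weight $-1$. Giving such a module is the same as giving a system $(M^\bullet,u_\bullet,t_\bullet)$ with $u\circ t=t\circ u=0$.

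\emph{Step 1: surjectivity and reduction to a kernel.} The target $E$ sits in weight $0$. In weight $0$ the map $\text{id}\otimes\nu-\rho\otimes\nu$ is
\[
E\oplus E'\rightarrow E,\qquad (x,y)\mapsto x-\rho(y),
\]
which is surjective because $x$ is arbitrary. Hence the cocone has no cohomology outside degree $0$. It is therefore the graded module $K$ given by the kernel. So it suffices to produce a graded isomorphism $M^\bullet\simeq K$ that is compatible with $u$ and $t$.

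\emph{Step 2: the graded pieces of $K$.} I would compute them as follows.
\begin{itemize}
\item For $n>0$ the weight-$n$ piece of $E\otimes_k k[u]\oplus E'\otimes_k k[t]$ is $Eu^n$. The target vanishes in this weight, so $K^n=Eu^n\cong E$.
\item For $n<0$ the weight-$n$ piece is $E't^{-n}$, and similarly $K^n\cong E'$.
\item For $n=0$ the kernel is $\{(\rho(y),y)\}$, which is identified with $E'$ via $y\mapsto(\rho(y),y)$.
\end{itemize}
This defines the natural map $M^\bullet\rightarrow K$:
\[
m\mapsto mu^n\ (n>0),\qquad m\mapsto (\rho(m),m)\ (n=0),\qquad m\mapsto mt^{-n}\ (n<0).
\]
It is bijective in each weight.

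\emph{Step 3: compatibility with $u$ and $t$.} This is the only step with content. The cases are as follows.
\begin{itemize}
\item Multiplying $(\rho(y),y)$ by $u$ gives $(\rho(y)u,\,yut)=(\rho(y)u,0)$. This matches $u_0=\rho$.
\item Multiplying $(\rho(y),y)$ by $t$ gives $(0,yt)$. This matches $t_{-1}=\text{id}$.
\item On $Eu^n$ with $n\ge 1$, multiplication by $t$ is $0$, matching $t_n=0$ for $n\ge 0$.
\item On $E't^{m}$ with $m\ge 1$, multiplication by $u$ is $0$, matching $u_n=0$ for $n<0$.
\item The remaining maps, $u$ on $Eu^n$ with $n\ge1$ and $t$ on $E't^m$ with $m\ge 1$, are the identities, matching $u_n=\text{id}_E$ for $n>0$ and $t_n=\text{id}_{E'}$ for $n<0$.
\end{itemize}
I expect no real obstacle. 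The main care needed is to keep the weight conventions straight, so that $u$ raises weight and the weight-$0$ piece is $E'$ rather than $E$.
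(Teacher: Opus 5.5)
Your proposal is correct and is exactly the verification the paper compresses into ``immediate from construction'': since $\mathrm{id}\otimes\nu-\rho\otimes\nu$ is surjective, the cocone is the graded kernel, and its weight pieces $E$ for $n>0$, $\{(\rho(y),y)\}\cong E'$ for $n=0$, and $E'$ for $n<0$, together with the $u,t$-actions, match the system $(M^\bullet,u_\bullet,t_\bullet)$. There is one cosmetic slip in Step 3: the second component of $u\cdot(\rho(y),y)$ should read $uy=0$, because $u$ acts by zero on $E'\otimes_k k[t]$; this does not affect the conclusion.
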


\begin{proof}
This is immediate from construction.
\end{proof}

\section{The case of a smooth scheme $X/k$}

\subsection{The complexes $\mls M_{(\mc E, \mc E', \rho )}^{\mc N}$}

Let $X/k$ be a smooth scheme, and as in \ref{P:1.2} consider a triple $(\mc E, \mc E', \rho )$ consisting of a pair of vector bundles $\mc E$ and $\mc E'$ on $X$ and a map $\rho :\mc E\rightarrow \mc E'$.  Define 
$\mls M_{(\mc E, \mc E', \rho )}^{\mc N}\in \mls D(X^{\mc N})$ as in \eqref{E:1.2.1}.

\begin{lem}\label{L:6.2} Locally on $X^{\mc N}$ the complex $\mc M^{\mc N}_{(\mc E, \mc E', \rho )}$ is isomorphic to a $2$-term complex of vector bundles concentrated in degrees $-1$ and $0$, which is acyclic after inverting $p$ .
\end{lem}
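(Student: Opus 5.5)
The plan is to reduce to a local computation on a flat cover of $X^{\mc N}$ and then mimic Lemma \ref{L:5.8}, the case of a point. The question is local on $X$ and on $X^{\mc N}$. First we restrict to an open $U\subset X$ on which $\mc E$ and $\mc E'$ are free. Over such a $U$, the map $\rho$ becomes a matrix with entries in $\mls O_U$, read through $\F_X^*$. All the functors entering \eqref{E:1.2.1} commute with flat base change on $X^{\mc N}$: these are $j_{t*}$, $j_{u*}$, $j_{0*}$ (all affine closed immersions), together with the pullbacks $\pi_t^*$, $\pi_u^*$ and $i_u^*$. We may therefore pull back along a flat cover $V\to X^{\mc N}$ by an affine $p$-adic formal scheme $V=\Spf(A)$, flat over $\mathbf{Z}_p$ by Lemma \ref{L:3.2}; such a cover exists by Remark \ref{R:3.3} and its analogue for $X^{\mc N}$. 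Over $V$ the images of $u$ and $t$ become elements of $A$ (after trivializing $\mc L$) with $ut=p$. Write $A_t=A/u$, $A_u=A/t$ and $A_0=A/(u,t)$. Flatness over $\mathbf{Z}_p$, together with the corresponding flatness for $W[u,t]/(ut-p)$, shows that the derived quotients agree with the underived ones. The triangle \eqref{E:1.2.1} then becomes the cocone of
$$
A_t^{r}\oplus A_u^{s}\xrightarrow{(\bar\rho,\,-\mathrm{id})} A_0^{s},
$$
where $r$ and $s$ are the ranks of $\mc E$ and $\mc E'$, and $\bar\rho$ is the matrix of $\rho$ pulled back to $A_0$ by the identification \eqref{E:Ftwist}.

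Next we find an explicit two-term model. Lift $\bar\rho$ to a matrix $\tilde\rho$ with entries in $A$. To do this we first lift it to $A_u$: since $\pi_u$ exists, the entries of $\rho$ already live in $A_u$ via $\pi_u^*\F_X^*$, and we then choose arbitrary lifts to $A$. Consider the complex
$$
A^{r}\oplus A^{s}\ \longrightarrow\ A^{s}\oplus A^{s}\quad\text{or, more economically,}\quad A^{r+s}\xrightarrow{\ \beta\ } A^{r+s}
$$
with $\beta=\begin{pmatrix} u\cdot\mathrm{id}_r & 0\\ -\tilde\rho & t\cdot\mathrm{id}_s\end{pmatrix}$ in suitable coordinates. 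This is modeled on the system $(M^\bullet,u_\bullet,t_\bullet)$ of Lemma \ref{L:5.8}: the $\mc E$-part is killed by $u$ on its support $t$-line, and the $\mc E'$-part is killed by $t$, with the two glued along $\rho$ at the origin. Since $\det\beta=u^rt^s$ divides $p^{\max(r,s)}$, the map $\beta$ becomes an isomorphism after inverting $p$. Because $u$ and $t$ are nonzerodivisors in $A$ (by flatness and the local structure $W[u,t]/(ut-p)$), $\beta$ is injective. It therefore remains to identify $\mathrm{coker}\,\beta$ with $H^0$ of \eqref{E:1.2.1}, and to check that the cocone has no $H^1$.

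That comparison is the step we expect to be the main obstacle. The surjectivity of $(\bar\rho,-\mathrm{id})$ is clear from the $-\mathrm{id}$ on the $A_u^s$ summand, so the cocone is concentrated in degree $0$. Its $H^0$ is the fiber product $A_t^r\times_{A_0^s}A_u^s$, consisting of pairs $(x,y)$ with $\bar\rho(x)=\bar y$. We will construct the map $\mathrm{coker}\,\beta\to A_t^r\times_{A_0^s}A_u^s$ by $(a,b)\mapsto(\bar a,\ \overline{b+\tilde\rho a})$, adjusting the signs and the ordering of the blocks so that the image of $\beta$ maps to $0$. Surjectivity is then immediate. For injectivity, suppose $a\in uA^r$ and $b+\tilde\rho a\in tA^s$; we must show that $(a,b)\in\mathrm{im}\,\beta$, which amounts to a diagram chase using $A/(u)\cap\ldots$ and the exactness of
$$
0\to A/(ut)\to A/u\oplus A/t\to A/(u,t)\to 0,
$$
the analogue of Corollary \ref{C:5.6}. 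This exactness requires $u$ and $t$ to form a "transversal" pair on $A$, i.e. $(u)\cap(t)=(ut)$, which we would verify from flatness of $A$ over $W[u,t]/(ut-p)$. Finally, the twisting by $\mathbf{G}_m$-weights, which records which weights are occupied in the model, only affects the grading and not the local statement.
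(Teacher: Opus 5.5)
Your route is sound but differs from the paper's, and the one explicit formula you give for the key comparison is wrong.

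The paper never writes a presentation involving $\rho$. It uses the triangle $\mathrm{fib}(j_{u*}\pi_u^*\mc E'\to j_{0*}i_u^*\pi_u^*\mc E')\to \mc M^{\mc N}_{(\mc E,\mc E',\rho)}\to j_{t*}\pi_t^*\mc E$, in which $\rho$ only affects the extension class. So it suffices that the two outer terms have projective dimension $1$, and this is checked in rank one on $k^{\mc N}$ via $[A\xrightarrow{u}A]$ and the Koszul complex of $(u,t)$. Your block matrix $\beta$ instead gives the free two-term model directly, and $\det\beta=u^rt^s$ makes the acyclicity after inverting $p$ transparent. Any lift $\tilde\rho$ of $\bar\rho$ to $A$ works; the detour through $A_u$ is not needed. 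The price of your route is a cokernel computation.

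Your $\beta$ is the right matrix, but the map $(a,b)\mapsto(\bar a,\overline{b+\tilde\rho a})$ fails in two ways. It does not land in $A_t^r\times_{A_0^s}A_u^s$, since that would need $\bar b=0$ in $A_0^s$. It also does not kill $\beta(a,b)=(ua,-\tilde\rho a+tb)$. No change of signs or block order repairs this. Use instead $\Psi(a,b)=(\bar a,\overline{\tilde\rho a+ub})$:
\begin{itemize}
\item it lands in the fiber product, because $ub\equiv 0$ modulo $(u,t)$;
\item it kills the image of $\beta$, because it sends $\beta(a,b)$ to $(0,\overline{utb})=(0,\overline{pb})=0$ in $A/t$;
\item it is surjective, because $\ker(A/t\to A/(u,t))=u\cdot(A/t)$;
\item it is injective: if $\Psi(a,b)=0$ then $a=ua'$ and $u(\tilde\rho a'+b)\in tA^s$, so $(u)\cap(t)=(ut)$ together with $u$ being a nonzerodivisor gives $\tilde\rho a'+b=tb'$, i.e. $(a,b)=\beta(a',b')$.
\end{itemize}

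The input $(u)\cap(t)=(ut)$, equivalently that $u$ is a nonzerodivisor on $A/t$, is not supplied by Lemma~\ref{L:3.2}. That lemma only makes $u$ and $t$ nonzerodivisors on $A$. You correctly see that you need flatness, or at least Tor-independence, of $X^{\mc N}\to k^{\mc N}$, but you leave it unproved. The paper relies on the same fact tacitly when it ``reduces to $k^{\mc N}$''. Both the Koszul resolution of $j_{0*}\mls O$ and the identification of $\mathrm{fib}(A/t\to A/(u,t))$ with a twist of $A/t$ need it. So once you justify it and correct $\Psi$, your argument is as complete as the paper's.

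Your remark about derived versus underived quotients is unnecessary. The functors $j_{t*},j_{u*},j_{0*}$ are exact pushforwards along closed immersions of classical substacks and commute with flat base change. Over $V$ they simply give $A/u$, $A/t$, $A/(u,t)$.
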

\begin{proof}
There is a distinguished triangle
$$
\xymatrix{
(j_{u*}\pi _u^*\mc E'\ar[r]^-{i_u^*}& j_{0*}i_u^*\pi _u^*\mc E')\ar[r]& \mc M^{\mc N}_{(\mc E, \mc E', \rho )}\ar[r]& j_{t*}\pi _t^*\mc E\ar[r]^-{+1}& }
$$
so it suffices to show that the complexes $j_{t*}\pi _t^*\mc E$ and $\xymatrix{j_{u*}\pi _u^*\mc E'\ar[r]^-{i_u^*}& j_{0*}i_u^*\pi _u^*\mc E'}$ have projective dimension $1$.
This can be verified locally when $\mc E$ and $\mc E'$ are free, which reduces to the case of trivial vector bundles of rank $1$ and $k^{\mc N}$.  In this case, $j_{t*}\mls O_{k_t^{\mc N}}$ is represented by the complex
$$
 \xymatrix{W[u,t]/(ut-p)\ar[r]^-u&  W[u,t]/(ut-p)}
$$
and the restriction map $j_{u*}\mls O_{k_u^{\mc N}}\rightarrow j_{0*}k_{0}^{\mc N}$ is represented by the map of complexes 
$$
\xymatrix{
& W[u,t]/(ut-p)\ar[r]^-t\ar[d]& W[u,t]/(ut-p)\ar@{=}[d]\\
W[u,t]/(ut-p)\ar[r]^-{\begin{pmatrix} u \\ t\end{pmatrix}}& W[u,t]/(ut-p)^{\oplus 2}\ar[r]^-{\begin{pmatrix} t & u \end{pmatrix}}& W[u,t]/(ut-p),}
$$
where the first vertical map is the inclusion of the first factor. From this the result follows.
\end{proof}

\begin{cor} For a pair $(\mc E, \rho )$ consisting of a vector bundle $\mc E$ on $X$ and a map $\rho :F^*\mc E\rightarrow \mc E$ the associated prismatic $F$-gauge $\mc M_{(\mc E, \rho )}^\syn    \in \mls D(X^\syn )$ defined in \ref{P:1.3} is an object of $\PG (X)$.
\end{cor}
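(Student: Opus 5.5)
We have to check the three conditions defining $\PG (X)$ for $\mc M^\syn _{(\mc E, \rho )}$: quasi-syntomic local representability by a two-term complex $\mc V^{-1}\rightarrow \mc V^0$ of vector bundles in degrees $-1,0$; the map becoming an isomorphism after inverting $p$; and Hodge-Tate weights in $[0,1]$. The first two conditions are local on $X^\syn $. By construction, the pullback of $\mc M^\syn _{(\mc E, \rho )}$ along the flat cover $X^{\mc N}\rightarrow X^\syn $ is $\mc M^{\mc N}_{(\mc E, \mc E, \rho )}$. Lemma \ref{L:6.2} is stated for a map $\rho :\mc E\rightarrow \mc E'$, but its proof only uses that $\mc E$ and $\mc E'$ are locally free. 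So it applies here with $\mc E'=\mc E$ and $\rho :F_X^*\mc E\rightarrow \mc E$, and it gives these two local conditions directly.

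For the Hodge-Tate weights, I will restrict along the Hodge-Tate locus of $X^\syn $, which is the image of $B\mathbf{G}_m$-type substacks of $X^{\mc N}$ lying over $B\mathbf{G}_{m,k}\subset k^{\mc N}$. I then compute the $\mathbf{G}_m$-weights of $j_0^*\mc M^{\mc N}$. Since the weight condition is local and compatible with base change, I first reduce to the case that $\mc E$ is free, so $\mc E=\mls O_X^r$ and $\rho $ is a matrix. After pulling back along $X\rightarrow \Sp (k)$-type charts, everything is then pulled back from the universal situation over $k^{\mc N}$ built from the rank one pieces $j_{t*}\mls O$, $j_{u*}\mls O$ and $j_{0*}\mls O$. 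The graded-module computation in the proof of Lemma \ref{L:6.2} applies to these. Concretely, taking the complexes
$$W[u,t]/(ut-p)\xrightarrow{u}W[u,t]/(ut-p)$$
and the analogous one for $j_{u*}\to j_{0*}$, and keeping track of weight shifts ($u$ has weight $1$, $t$ has weight $-1$), the two-term complex has $\mc V^0$ of weight $0$ and $\mc V^{-1}$ a sum of twists $\mls O\{-1\}$ and $\mls O$. So the weights lie in $\{0,1\}$, with the sign convention of the Remark following the definition of $\PG $.

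As an alternative check, I would compare fiberwise with the case of a point. At a geometric point $x$ of $X$, the restriction is the gauge of Lemma \ref{L:5.8} attached to $(E_x,F^*E_x,\rho _x)$. Its graded pieces $M^n$ stabilize for $n\geq 1$ and for $n\leq 0$, which is exactly the pattern for Hodge-Tate weights in $[0,1]$ (compare Lemma \ref{L:5.2}). Since the weights of a perfect complex can be checked on fibers, this also settles the weight condition.

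The main obstacle is the bookkeeping of grading and twist conventions. One has to identify the Hodge-Tate weight filtration with the $\mathbf{G}_m$-weights on $j_0^*$, make sure the resulting range is $[0,1]$ and not $[-1,0]$, and check that the weights of $\mc V^{-1}$ and $\mc V^0$ match the required presentation. I would settle this by matching with the point case via Lemma \ref{L:5.8}, whose description is unambiguous.
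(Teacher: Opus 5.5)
Your proposal is correct and in its decisive step it is the paper's proof: the local two-term presentation, acyclic after inverting $p$, comes from Lemma \ref{L:6.2} applied with $\mc E'=\mc E$, and the weight range is checked after pulling back to perfect-field points, where Lemma \ref{L:5.8} gives a gauge with $u_n$ an isomorphism for $n\geq 1$ and $t_n$ an isomorphism for $n\leq -1$ (your ``alternative check'' is exactly this). Your primary direct computation has a harmless slip: in the rank-one local model, $\text{cocone}(j_{u*}\mls O\to j_{0*}\mls O)\simeq u\cdot k[u]$ is resolved by $W[u,t]/(ut-p)\xrightarrow{t}W[u,t]/(ut-p)$ with the degree-$0$ generator of weight $1$ and the degree-$(-1)$ generator of weight $0$, so $\mc V^0$ also has an $\mls O\{-1\}$-type summand and is not pure of weight $0$ (and $\rho$ is not pulled back from $k^{\mc N}$); the conclusion still holds, because the weights of $\mc M^{\mc N}$ lie in the union of those of the two outer terms of the triangle in the proof of Lemma \ref{L:6.2}, and both lie in $\{0,1\}$.
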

\begin{proof}
The only part that does not follow immediately from \ref{L:6.2} is the range of the Hodge-Tate weights.  This part can be verified after derived pullback along an arbitrary map $\mathrm{Spec}\, \Omega \to X$, where $\Omega$ is a perfect field of characteristic $p$, where it follows from \ref{L:5.8}.
\end{proof}

\subsection{Dieudonn\'e module of a group scheme}
Let $X/k$ be a smooth group scheme and let $G\in \text{FFG}(X)$ be a finite flat abelian group scheme killed by $p$. Let $\mc M(G)_{p=0}\in \mls D(X_{p=0}^\syn )$ denote $R^1\pi _{p=0, *}^\syn \mls O_{BG_{p=0}^\syn }$ so that by \ref{L:3.6} we have $\mc M(G) = z_*\mc M(G)_{p=0}$.  Let $\mls M(G)_t$ (resp. $\mls M(G)_u$, $\mls M(G)_0$) denote the restriction of $\mls M(G)_{p=0}$ to $X_t^{\mc N}$ (resp. $X^{\mc N}_u$, $X_0^{\mc N}$).

\begin{lem}\label{L:6.5}
    The natural map in $\mls D(X^{\mc N})$
    $$
    \mc M(G)_{p=0}|_{X^{\mc N}_{p=0}}\rightarrow \text{\rm cocone}(j_{t*}\mls M(G)_t\oplus j_{u*}\mls M(G)_u\rightarrow j_{0*}\mls M(G)_0)
    $$
    is an isomorphism.
\end{lem}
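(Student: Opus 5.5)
The isomorphism is a statement about the structure sheaf of $X^{\mc N}_{p=0}$ tensored with $\mc M(G)_{p=0}$, and the plan is to reduce it to the computation already carried out in Corollary \ref{C:5.6}. By construction $X^{\mc N}_{p=0}\rightarrow k^{\mc N}_{p=0}$, and $X_t^{\mc N}$, $X_u^{\mc N}$, $X_0^{\mc N}$ are the base changes of $[\Sp (k[t])/\mathbf{G}_m]$, $[\Sp (k[u])/\mathbf{G}_m]$, $B\mathbf{G}_m$. For any quasi-coherent $\mc F$ on $X^{\mc N}_{p=0}$ the map
$$
\mc F\rightarrow \text{cocone}(j_{t*}j_t^*\mc F\oplus j_{u*}j_u^*\mc F\rightarrow j_{0*}j_0^*\mc F)
$$
is obtained by tensoring $\mc F$ (derived) with the corresponding map for $\mls O_{X^{\mc N}_{p=0}}$, using the projection formula for the affine closed immersions $j_t,j_u,j_0$. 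It therefore suffices to treat the case $\mc F=\mls O_{X^{\mc N}_{p=0}}$.

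For the structure sheaf, the sequence on $k^{\mc N}_{p=0}$ is exact by the computation
$$
0\rightarrow k[u,t]/(ut)\rightarrow k[u]\oplus k[t]\rightarrow k\rightarrow 0
$$
in the proof of \ref{C:5.6}. I will pull this back along $X^{\mc N}_{p=0}\rightarrow k^{\mc N}_{p=0}$, and the point is that derived pullback commutes with the pushforwards $j_*$. This is base change for the affine morphisms $j_t,j_u,j_0$, valid in the derived sense by Tor-independence. Equivalently, one needs the derived fibre products $X^{\mc N}\times _{k^{\mc N}}k_t^{\mc N}$ etc.\ to be the classical ones. To get this I will use flatness of $X^{\mc N}\rightarrow k^{\mc N}$, which I check on the flat cover of Remark \ref{R:3.3} and \cite[Corollary 2.21]{pentland2025syntomificationcrystallinelocalsystems}, as in Lemma \ref{L:3.2}. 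Once Tor-independence holds, the pulled-back sequence is the structure-sheaf sequence on $X^{\mc N}_{p=0}$, which is therefore a fibre sequence.

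There is one point to address: $\mc M(G)_{p=0}$ lives on $X^\syn _{p=0}$, and the statement concerns its restriction to $X^{\mc N}_{p=0}$. Since restriction along $X^{\mc N}\rightarrow X^\syn $ is just pullback, this is harmless. Finally, I must check that the maps appearing in the cocone are the restriction maps, with the sign conventions of \ref{C:5.6}. This should follow from the same naturality as in the tensoring step.

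The main obstacle is the flatness / Tor-independence of $X^{\mc N}$ over $k^{\mc N}$ in characteristic $p$. My first approach is to verify it étale locally using a Frobenius lift, where $X^{\mc N}$ is described explicitly over $k^{\mc N}$. If that fails, I would use quasisyntomic descent to reduce to quasiregular semiperfect $R$, where $X^{\mc N}$ is the Rees stack of the Nygaard-filtered prismatic cohomology, which has flat graded pieces.
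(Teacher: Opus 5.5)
Your proof is correct, but it takes a different route from the paper's.

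\textbf{The paper's route.} The paper argues pointwise. By derived Nakayama it suffices to check the map after pullback to geometric points of $X^{\mc N}$, and in fact only at points of the Hodge--Tate locus $X^{HT}=X^{\mc N}_{u=t=0}$. Since $X^{HT}$ is a gerbe over $X$, each such point factors through $\Sp(\Omega)^{HT}$, so the claim reduces to the field case, Corollary \ref{C:5.6}.

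\textbf{Your route.} You run the argument of Corollary \ref{C:5.6} directly over $X$. The projection formula for $j_t,j_u,j_0$ reduces the claim to the structure sheaf. The structure-sheaf sequence on $X^{\mc N}_{p=0}$ is then the pullback of $0\rightarrow k[u,t]/(ut)\rightarrow k[u]\oplus k[t]\rightarrow k\rightarrow 0$ along the flat map $X^{\mc N}_{p=0}\rightarrow k^{\mc N}_{p=0}$.

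\textbf{What each buys.} Your version proves the statement for every object of $\mls D_{\qcoh}(X^{\mc N}_{p=0})$. It uses nothing about $\mc M(G)$ or about the gerbe structure of $X^{HT}$, and it avoids the finiteness hypotheses behind derived Nakayama. It also makes explicit the one geometric input: Tor-independence of $X^{\mc N}$ and $k^{\mc N}_t,k^{\mc N}_u,k^{\mc N}_0$ over $k^{\mc N}$. The paper's reduction tacitly needs the same kind of input, since it must commute $j_{t*},j_{u*},j_{0*}$ with pullback along $\Sp(\Omega)^{\mc N}\rightarrow X^{\mc N}$. In exchange, the paper's pointwise reduction is a template it reuses verbatim to prove that the comparison map to $\mls M(G)$ is an isomorphism.

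\textbf{Two small corrections.}
\begin{enumerate}
\item Your tensoring step uses that $\mls M(G)_t,\mls M(G)_u,\mls M(G)_0$ are derived restrictions. This is the intended reading (compare Corollary \ref{C:5.6}); with underived restrictions the projection-formula reduction would not apply as stated.
\item You need flatness of $X^{\mc N}$ over $k^{\mc N}$. This is strictly stronger than the flatness over $\mathbf{Z}_p$ stated in Lemma \ref{L:3.2}, and Remark \ref{R:3.3} gives a cover of $X^\Prism$, not of $X^{\mc N}$. One direct way to get it: after \'etale localization on $X$ you are reduced to affine space. There $(\mathbf{A}^1_k)^{\mc N}=\mathbf{G}_a^{\mc N}$ is the quotient of the flat sheaf $W$ by the flat group $M$ of \eqref{E:definingdiagram}, an extension of $\F_*W$ by $\mathbf{V}(\mc L)^\sharp$. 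Hence it is flat over $k^{\mc N}$.
\end{enumerate}
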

\begin{proof}
By the derived version of Nakayama's lemma \cite[\href{https://stacks.math.columbia.edu/tag/0G1U}{Tag 0G1U}]{stacks-project} it suffices to show that the map is an isomorphism over points $\Sp (\Omega )\rightarrow X^{\mc N}$ with $\Omega $ an algebraically closed field over $k$, and furthermore it suffices to consider such points with image in the Hodge-Tate stack $X^{HT}:= X^{\mc N}_{u=t=0}$.  The stack $X^{HT}$ is a gerbe over $X$ (see \cite[5.12]{BL1}), so any map $\Sp (\Omega )\rightarrow X^{HT}$ factors through $\Sp (\Omega )^{HT}$.  This therefore reduces the proof to the case of a field which is \ref{C:5.6}.
\end{proof}

Suppose furthermore that $G^*$ has height $1$, and let $(\mls E, \rho )$ denote the Lie algebra $\mls Lie(G^*)$ with semilinear map $\rho :\F _X^*\mls Lie _{G^*}\rightarrow \mls Lie _{G^*}$.

Pulling back along the maps defined by the commutative squares
$$
\xymatrix{
BG_t^{\mc N}\ar[d]\ar[r]^-{\pi _t^{BG}}& BG\ar[d]\\
X_t^{\mc N}\ar[r]^-{\pi _t}& X,}
$$
$$
\xymatrix{
BG_u^{\mc N}\ar[d]\ar[r]^-{\pi _u^{BG}}& BG\ar[d]\\
X_u^{\mc N}\ar[r]^-{\pi _u}& X,}
$$
and 
$$
\xymatrix{
BG_0^{\mc N}\ar[d]\ar[r]^-{\pi _0^{BG}}& BG\ar[d]\\
X_0^{\mc N}\ar[r]^-{\pi _0}& X,}
$$
we get a {commutative diagram} 
$$
\xymatrix{
j_{t*}\pi _t^*\mc E\oplus j_{u*}\pi _u^*\mc E\ar[d]\ar[r]^-{\rho |_{X_0^{\mc N}}-i_u^*}& j_{0*}i_u^*\pi _u^*\mc E\ar[d]\\
j_{t*}\mls M(G)_t\oplus j_{u*}\mls M(G)_u\ar[r]& j_{0*}\mls M(G)_0.}
$$
By taking cocones of the horizontal maps, the above diagram defines a map
$$
\mls M^{\mc N}_{(\mc E, \rho )}\rightarrow \mls M(G).
$$
By construction this is compatible with the gluing data and therefore also defines a morphism
\begin{equation}\label{E:themap}
\mls M^\syn _{(\mc E, \rho )}\rightarrow \mls M(G).
\end{equation}

\begin{thm} The map \eqref{E:themap} is an isomorphism.
\end{thm}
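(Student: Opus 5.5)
Both sides of \eqref{E:themap} are objects of $\PG (X)$: the source by the corollary following \ref{L:6.2}, and the target because it is the Dieudonn\'e module of a finite flat group scheme. The map descends from a map $\mls M^{\mc N}_{(\mc E, \rho )}\rightarrow \mls M(G)|_{X^{\mc N}}$, so it suffices to check that this map on $X^{\mc N}$ is an isomorphism. Both sides are perfect, so by derived Nakayama (as in the proof of \ref{L:6.5}) it is enough to check after pulling back to points $\Sp (\Omega )\rightarrow X^{\mc N}$ with $\Omega $ algebraically closed. We may further take these points to lie in $X^{HT}$, since $p,u,t$ are topologically nilpotent and the closed points lie over $u=t=0$. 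Because $X^{HT}$ is a gerbe over $X$, such a point factors through $\Sp (\Omega )^{HT}$. Both constructions commute with base change along $x:\Sp (\Omega )\rightarrow X$: the construction $(\mc E,\rho )\mapsto \mls M^\syn _{(\mc E,\rho )}$ is visibly functorial in $X$, and $\mls M(G^*)$ commutes with base change by the general theory. This reduces the theorem to the case $X=\Sp (\Omega )$ with $G$ of height $1$ over $\Omega $.

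Over a perfect field we use \ref{C:isocheck}: a morphism in $\PG (\Sp (\Omega ))$ is an isomorphism as soon as its de Rham realization $j_\dR ^*$ is one. By construction, $j_\dR ^*\mls M^\syn _{(\mc E,\rho )}$ is $\pi ^*\mc E$, viewed as a $W$-module killed by $p$; this is the $M^1$, equivalently $M^0$, term in \ref{L:5.8}. On the other side, $j_\dR ^*\mls M(G)$ is the crystalline Dieudonn\'e module, via the crystalline realization and \cite[2.2.6]{mondal2025dieudonnetheoryclassifyingstacks}. The de Rham component of our map is then exactly the map \eqref{E:4.6.1} evaluated at the point: it is the $t$-part $j_{t*}\pi _t^*\mc E\rightarrow j_{t*}\mls M(G)_t$, and that part is built by pulling back along $\pi _t^{BG}$, which after restricting to the crystalline stack is $\pi _t^{\crys BG}$. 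By Corollary \ref{C:4.14}, which uses \cite[4.3.6]{MR667344} together with Proposition \ref{P:4.10}, this map is an isomorphism for height $1$ groups. Hence the map is an isomorphism at every point, and so globally.

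The main obstacle is the identification in the previous paragraph. One must check that, under $X^{\mc N}_{t\neq 0}\simeq \sigma ^*X^\Prism $ and $\phi ^*X^\Prism \simeq X^\crys $, restricting the $t$-component of the defining diagram to the de Rham locus really gives \eqref{E:4.6.1}. This requires tracking the Frobenius twist built into $\pi _t$, namely the factor $F:W\rightarrow F_*W$, and confirming that it matches the twist in $\Phi $. A second point is that \ref{C:isocheck} is stated with $j_\dR $ on all of $\mls M$, while our map is built mod $p$. Lemma \ref{L:3.6} shows $\mls M(G)=z_*\mls M(G)_{p=0}$, and the source is likewise killed by $p$, so restricting to the de Rham locus commutes with this identification.

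If this matching proves awkward, a fallback at the point is to compare the two systems $(M^\bullet ,u_\bullet ,t_\bullet )$ directly. Using \ref{L:5.2}, it suffices to check that $M^1\rightarrow M'^1$ is an isomorphism. For the source, $M^1=E$ by \ref{L:5.8}. For the target, $M'^1$ is the $u$-side fibre $R^1\pi _*\mls O_{BG}$, and the $u$-component of our map is $\pi _u^*$ of the isomorphism $\mls Lie _{G^*}\simeq R^1\pi _*\mls O_{BG}$ from the earlier lemma. That gives a second route to the isomorphism on points.
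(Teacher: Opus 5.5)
Your main argument is the paper's proof: reduce as in \ref{L:6.5} (derived Nakayama, points over $u=t=0$, the gerbe $X^{HT}\rightarrow X$) to the case of a point, and conclude there from \ref{C:isocheck}, \ref{L:5.8} and \ref{C:4.14}; one correction is that $u$ and $t$ are not topologically nilpotent, and the restriction to points over $u=t=0$ is justified instead by the $\mathbf{G}_m$-action, under which every point of $X^{\mc N}_{p=0}$ specializes into that locus. Your fallback is not an independent route: the $u$-component is the composite of the Lie algebra isomorphism with the base-change map $\pi_u^*R^1\pi_*\mls O_{BG}\rightarrow \mls M(G)_u$, whose bijectivity the lemma $\mls Lie_{G^*}\simeq R^1\pi_*\mls O_{BG}$ does not supply, and through the gluing $j_{HT}^*\simeq j_{\dR}^*$ that bijectivity is equivalent to the de Rham statement, so it again needs \ref{C:4.14}.
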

\begin{proof}
 As in the proof of \ref{L:6.5} it suffices to consider the case when $X$ is a point, where the result follows from \ref{C:4.14}, \ref{C:isocheck}, and \ref{L:5.8}.
\end{proof}

\section{Proof of Theorem \ref{T:theorem3}}

This is a variation of the proof of \cite[4.4.7]{Bhattnotes}.

First let us describe the cohomology
$$
R\Gamma (X^{\mc N}, \mc M^{\mc N}_{(\mc E, \mc E', \rho )}\{m\})
$$
for a triple $(\mc E, \mc E', \rho )$ and an integer $m\geq 0$.  For this it suffices to calculate $R\Gamma (X_t^{\mc N}, \pi _t^*\mc E\{m\})$, $R\Gamma (X_u^{\mc N}, \pi _u^*\mc E'\{m\})$, $R\Gamma (X_0^{\mc N}, i_u^*\pi _u^*\mc E'\{m\})$, and the restriction maps relating these.  Let $\Sp (R)\rightarrow X$ be an \'etale morphism and let $\mc E_R$ (resp. $\mc E'_R$) be the $R$-module defined by $\mc E$ (resp. $\mc E'$).  Then by the projection formula and \cite[2.8.6]{Bhattnotes} we have
$$
R\Gamma ((\Sp (R))_t^{\mc N}, \pi _t^*\mc E\{m\}) = \mc E_R\otimes _RR\Gamma (\Sp (R)_t^{\mc N}, \mc O_{X_t^{\mc N}}\{m\})\simeq \mc E_R\otimes \text{Fil}^m_{Hodge}\Omega ^\bullet _{R/k},
$$
where $\text{Fil}^m_{Hodge}\Omega ^\bullet _{R/k}$ denotes the $m$-th step of the Hodge filtration viewed as a complex of $R$-modules via the Frobenius morphism on $R$.  Similarly we have 
$$
R\Gamma (X_u^{\mc N}, \pi _u^*\mc E'\{m\})\simeq \mc E'_R\otimes _R\text{Fil}^m_{conj}\Omega ^\bullet _{R/k},
$$
where $\text{Fil}^m_{conj}\Omega ^\bullet _{R/k}$ denotes the $m$-th step of the conjugate filtration, and 
$$
R\Gamma (X_0^{\mc N}, i_u^*\pi _u^*\mc E'\{m\})\simeq \mc E'_R\otimes \Omega ^m_{R/k}[-m].
$$
Furthermore, by loc. cit. the restriction map
$$
\rho :\mc E_R\otimes \text{Fil}^m_{Hodge}\Omega ^\bullet _{R/k}\rightarrow \mc E'\otimes _R\Omega ^m_{R/k}[-m]
$$
is the tensor product of $\rho $ with the natural map $\text{Fil}^m_{Hodge}\Omega ^\bullet _{R/k}\rightarrow \Omega ^m_{R/k}[-m]$ (which we suppress from the notation), and the restriction map
$$
 C:\mc E'_R\otimes _R\text{Fil}^m_{conj}\Omega ^\bullet _{R/k}\rightarrow \mc E'_R\otimes _R\Omega ^m_{R/k}[-m]
$$
is the tensor product of the identity map on $\mc E'_R$ (which we also suppress from the notation) and the map $\text{Fil}^m_{conj}\Omega ^\bullet _{R/k}\rightarrow \Omega ^m_{R/k}[-m]$ given by the Cartier operator.

Taking limits over the category of \'etale morphisms $\Sp (R)\rightarrow X$ we find that $R\Gamma (X^{\mc N}, \mc M^{\mc N}_{(\mc E, \mc E', \rho )}\{m\})$ is isomorphic to
$$
\text{cocone}\left(\xymatrix{R\Gamma (X, \mc E\otimes \text{Fil}^m_{Hodge}\Omega ^\bullet _{X/k})\oplus R\Gamma (X, \mc E'\otimes \text{Fil}^m_{conj}\Omega ^\bullet _{X/k})\ar[r]^-{\rho -C}& R\Gamma (X, \mc E'\otimes \Omega ^m_{X/k}[-m])}\right).
$$

Now in the case of $\mc M^\syn _{(\mc E, \rho )}$ this implies that $\R \Gamma (X^\syn , \mc M^\syn _{(\mc E, \rho )}\{m\})$ is given by first forming the cocone (incorporating the gluing of the two copies of $X^\Prism $)
$$
\mc K:= \text{cocone}\left(R\Gamma (X, \mc E\otimes \text{Fil}^m_{Hodge}\Omega ^\bullet _{X/k})\oplus R\Gamma (X, \mc E\otimes \text{Fil}^m_{conj}\Omega ^\bullet _{X/k})\rightarrow R\Gamma (X, \Omega ^\bullet _{X/k})\right)
$$
and then taking the cocone of the induced map $\rho -C:\mc K\rightarrow R\Gamma (X, \Omega ^m_{X/k}[-m])$.  Now observe that the  restriction map
$$
\mc E\otimes \text{Fil}^m_{Hodge}\Omega ^\bullet _{X/k}\oplus \mc E\otimes \text{Fil}^m_{conj}\rightarrow \mc E\otimes \Omega ^\bullet _{X/k}
$$
is an isomorphism in all degrees except degree $m$, where the map is given by
$$
\mc E\otimes \Omega ^m_{X/k}\oplus \mc E\otimes F_{X*}Z^m_{X/k}\rightarrow \mc E\otimes \Omega ^m_{X/k}.
$$
It follows that $\mc K\simeq R\Gamma (X, \mc E\otimes F_{X*}Z^m_{X/k}[-m])$ with the restriction map to $R\Gamma (X, \Omega ^m_{X/k}[-m])$ given by $\rho -C$. 

From this \ref{T:theorem3} follows. \qed

\bibliographystyle{amsplain}
\bibliography{bibliography}{}

\end{document}